\documentclass[letterpaper, 10pt, conference]{ieeeconf}
\pdfoutput=1\IEEEoverridecommandlockouts \overrideIEEEmargins
\usepackage{amsmath,amssymb,url,color,multirow,float}
\usepackage{lipsum}
\usepackage{array} 
\usepackage{graphicx,subfigure,enumerate}

\newcommand{\m}{\mathrm{\mathbf{m}}} 
\newcommand{\al}{\alpha}
\newcommand{\be}{\beta}
\newcommand{\de}{\delta}

\newcommand{\ga}{\gamma}
\newcommand{\lam}{\lambda}
\newcommand{\td}{\tilde}

\newcommand{\diag}{\mathrm{diag}}

\newcommand{\I}{I_{3\times3}}
\renewcommand{\t}{\times}

\newcommand{\tr}{\mathrm{tr}}
\newcommand{\no}{\nonumber}
\newcommand{\SO}{\ensuremath{\mathsf{SO(3)}}}
\newcommand{\so}{\ensuremath{\mathfrak{so}(3)}}
\newcommand{\Sph}{\ensuremath{\mathsf{S}}}
\newcommand{\T}{\ensuremath{\mathsf{T}}}

\newcommand{\refeqn}[1]{(\ref{eq:#1})}
\renewcommand{\Re}{\ensuremath{\mathbb{R}}}
\newcommand{\w}{{\omega}}
\newcommand{\W}{{\Omega}}

\newcommand{\V}{\mathcal{V}}

\newcommand{\M}{\mathcal{M}}
\newcommand{\C}{\mathcal{C}}
\newcommand{\D}{\mathcal{D}}

\newcommand{\G}{\mathcal{G}}

\newcommand{\Ra}{\mathrm{I}}
\newcommand{\Rb}{\mathrm{II}}
\newcommand{\Rc}{\mathrm{III}}

\newcommand{\norm}[1]{\ensuremath{\left\| #1 \right\|}}

\DeclareMathOperator*{\argmin}{arg\,min}

\title{\LARGE \bf
Hybrid Attitude Observer on SO(3) with Global Asymptotic Stability}

\author{Tse-Huai Wu, Evan Kaufman and Taeyoung Lee\authorrefmark{1}%
\thanks{Tse-Huai Wu and Taeyoung Lee, Mechanical and Aerospace Engineering, The George Washington University, Washington DC 20052. {\tt \{wu52,evankaufman,tylee\}@gwu.edu}}%
\thanks{This research has been supported in part by NSF under the grant CMMI-1243000 (transferred from 1029551), CMMI-1335008, and CNS-1337722.}
}

\newtheorem{prop}{Proposition}


\graphicspath{{./Figures/}}
\begin{document}
\allowdisplaybreaks
\maketitle \thispagestyle{empty} \pagestyle{empty}

\begin{abstract}
This paper presents a deterministic hybrid observer for the attitude dynamics of a rigid body that guarantees global asymptotical stability. Any smooth attitude observer suffers from the inherent topological restriction that it is impossible to achieve global attractivity, and as such, attitude observers have been developed with almost global asymptotic stability. We demonstrate that such attitude observer may yields very slow initial convergence rates, and motivated by this, we propose a hybrid attitude observer that guarantees global asymptotic stability on the special orthogonal group. We illustrate that the proposed observer exhibits substantially improved convergence rate uniformly via numerical examples and experiments. 
%
\end{abstract}

\section{Introduction}
Attitude estimation has been intensively studied with various filtering approaches and assumptions~\cite{Crassidis07}. They can be categorized with the choice of attitude representations. It is well known that minimal, three-parameter attitude representations, such as Euler-angles or modified Rodriguez parameters, suffer from singularities. They are not suitable for large angle rotational maneuvers, as the type of parameters should be switched persistently in the vicinity of singularities. 

Quaternions are another popular choice in attitude estimation~\cite{Bar1985,Markley94,Psiaki2000}. They do not have singularities, but as the configuration space of quaternions, namely the three-sphere double covers the special orthogonal group of the attitude configuration space, there exists ambiguity. This implies that a single attitude may be represented by two antipodal points on the three-sphere. The ambiguity should be carefully resolved in any quaternion-based attitude observer and controller, otherwise they may exhibit unwinding, where an initial attitude estimate goes through unnecessarily large rotations even if the initial estimate error is small, or it becomes sensitive to noise~\cite{BhaBerSCL00}. To resolve this consistently, an additional mechanism to lift the measurement of attitudes into the three-sphere has been introduced~\cite{MaySanITAC11}.


Instead, attitude observers have been designed directly on the special orthogonal group to avoid both singularities of local coordinates and the ambiguity of quaternions. These include, for example, complementary filters~\cite{Hamel2006,Mahony08}, an observer in the presence of angular measurement noise~\cite{Vas2009}, and attitude estimation with single vector measurements~\cite{Alireza10,Batista14B}. However, these results are commonly restricted by the topological restriction of the special orthogonal group, prohibiting global attractivity for smooth attitude flows~\cite{BhaBerSCL00}. As such, they guarantee \textit{almost} global asymptotic stability instead, where the region of attraction excludes a certain set of zero measure, which corresponds to the stable manifold to undesired equilibria. This is considered as the strongest stability property that can be achieved by smooth attitude estimators. 

The fact that the region of attraction does not cover the entire configuration manifold does not seem to be a major issue in practice, as the probability that an initial condition exactly lies in such undesired set is zero, provided that the initial condition is chosen randomly. But, the existence of those undesired equilibria may have nontrivial effects on the attitude dynamics~\cite{LeeLeoPICDC11}. In particular, attitude estimators may yield significantly poor convergence rates, especially with large initial attitude estimator errors.  As a result, an initial attitude estimate that is close to the exact opposite of the true attitude may not be corrected for a while, thereby causing significant performance degradations.

Recently, in attitude control systems, the topological restriction of the special orthogonal group has been addressed via hybrid control system approaches~\cite{MaySanITAC11,Mayhew13}, where a hysteresis-based switching algorithm is introduced to achieve global asymptotic stability. A new set of attitude error functions is introduced in~\cite{LeeGlobal,LeeITAC15} to guarantee stronger global exponential stability with an explicit and compact form of stability criteria. 

The main objective of this paper is to achieve global asymptotic stability in attitude estimation, while addressing the ambiguity of quaternions and the topological obstruction of the special orthogonal group concurrently. The synergistic attitude error functions introduced in~\cite{LeeGlobal,LeeITAC15} are based on two preselected reference directions that should be orthogonal with each other, and they are not suitable for attitude estimation that are based on attitude measurements or line-of-sight measurements toward, possibly many, arbitrary reference objects. We first introduce revised synergistic attitude error functions without such restrictions of~\cite{LeeGlobal,LeeITAC15}, and apply them to attitude estimation while compensating the effects of a fixed bias in angular velocity measurements. The key idea is designing a set of attitude error functions such that attitude estimates are expelled from undesired equilibria to achieve global asymptotic stability. 

Perhaps, the only other attitude observers that guarantee global attractivity corresponds to the work in~\cite{Batista12,Batista14A}. However, it is considered that the special orthogonal group is embedded in a linear space, and an observer is designed on the linear space to overcome the topological restriction. Therefore, the estimated attitude does not necessarily evolve on the special orthogonal group. In short, attitude observers constructed on the special orthogonal group that guarantee global asymptotic stability have been unprecedented, and it is the unique contribution of this paper. The desirable properties of the proposed approach are illustrated by both numerical examples and experimental results.

Another contribution of this paper is developing a numerical algorithm to implement the proposed attitude observer while preserving the structures of the special orthogonal group. It is well known that conventional numerical integrators, such as the Runge-Kutta method and its variations, do not conserve the orthogonality of rotation matrices, thereby yielding attitude estimates that are incorrect geometrically~\cite{Hairer2000}. This paper construct a numerical algorithm 
based on the Lie group method~\cite{IseMunAN00}, to guarantee that the estimated attitudes evolve on the special orthogonal group. The desirable properties of the proposed approach are illustrated by both numerical examples and experimental results. 

The preliminary results of this paper have been presented in~\cite{Wu2015}. This paper include experimental verification, geometric numerical algorithm, and stability proof that have not discussed in~\cite{Wu2015}. The paper is organized as follows: an attitude estimation problem is formulated, and prior results are summarized at Sections II, and III respectively. A hybrid attitude observer guaranteeing global asymptotic stability is proposed at Section IV, followed by numerical examples with geometric numerical integration techniques and experimental verification.



\section{Problem Formulation}\label{sec:Dynamics}

Consider the attitude dynamics of a rigid body. Two coordinate frames are defined: an inertial reference frame and a body-fixed frame. The attitude of the rigid body is described by a rotation matrix $R\in\SO$ that represents the transformation of a representation of a vector from the body-fixed frame to the inertial reference frame. The configuration manifold of attitude is the special orthogonal group:
	\begin{align*} 
	\SO=\{R\in\Re^{3\t3}\,|\,{R^\T}R=I,\,\det[R]=1\}.
	\end{align*}
Let $\w\in\Re^3$ and $\W\in\Re^3$ denote the angular velocities of the rigid body, represented with respect to the inertial reference frame, and the body-fixed frame, respectively.	
The attitude kinematics equations are given by
	\begin{gather} 
	\dot{R}=\hat{\w}R=R\hat{\W}, \label{eq:eom2}
	\end{gather} 
where the \textit{hat} map $\wedge:\Re^3\rightarrow\so$ transforms a vector in $\Re^3$ to a $3\t3$ skew-symmetric matrix such that $\hat{x}y=x^\wedge y=x\t y$ for any $x,\,y\in\Re^3$. And the inverse of hat map is denoted by the \textit{vee} map $\vee:\so\rightarrow\Re^3$. Several properties of hat map are listed as follows:
	\begin{gather} 
	\widehat{x\t y}=\hat{x}\hat{y}-\hat{y}\hat{x}=yx^\T-xy^\T, \label{eq:widehat}\\
	\tr[A\hat{x}]=\tr[\hat{x}A]=-x^\T(A-A^\T), \label{eq:hat1}\\
	R\hat{x}R^\T =(Rx)^\wedge, \label{eq:hat2}\\
	\hat{x}A+A^\T\hat{x}=(\{\tr[A]\I-A\}x)^\wedge \label{eq:hat3},
	\end{gather}
for any $x\in\Re^3,\,A\in\Re^{3\t3},\,R\in\SO$.  The standard inner product of two vectors is denoted by $x\cdot y=x^\T y$. Throughout this paper, $\I$ denotes the $3\t3$ identity matrix and the 2-norm of matrix $A$ is denoted by $\|A\|$.

The measurement model is as follows. Let $v^I_i\in\Sph^2=\{v\in\Re^3\,|\, \norm{v}=1\}$ be the unit-vector from the origin of the body-fixed frame along the $i$-th reference direction represented with respect to the inertial frame. They may be the direction of the gravity, the direction of the magnetic field, or the line-of-sight toward a distinctive point. It is assumed that there are $n>2$ distinct, known reference directions. Also, they are normalized such that $\|v^I_i\|=1$. The rigid body is equipped with sensors to measure such reference directions, such as an accelerometer or a magnetometer, and the corresponding $i$-th measurement, namely $v_i^B\in\Sph^2$ is obtained with respect to the body-fixed frame as
	\begin{align} 
	v_i^B=R^\T v_i^I. \label{eq:vb}
	\end{align}

Also, it is assumed that the angular velocity is measured with a gyro with a unknown but fixed bias, such that the measured angular velocity $\Omega_y\in\Re^3$ is written as
	\begin{gather} 
	\W_y=\W+\ga, \label{eq:Wy}
	\end{gather} 
where $\ga\in\Re^3$ denotes the constant gyro bias. In short, the measurements correspond to $\{v_1^B,\ldots v_n^B\}$  with $n>2$, and $\Omega_y$. We wish to design an observer to estimate the attitude of the rigid body $R$, and the gyro bias $\ga$, based on these measurements.

\section{Almost Global Attitude Observer on $\SO$} \label{sec:obs}

In this section, we first review the the passive complementary filter with bias correction presented in~\cite{Mahony08}. This attitude estimator is shown to guarantee almost global asymptotic stability, where there exist three additional undesired equilibria along the flows of the filter. 

\subsection{Estimate Frame and Error Variables}

Define an orthonormal frame estimated by the observer. The orientation and angular velocity of the estimate frame with respect to the inertial reference frame are denoted by $\bar R\in\SO$ and $\bar\w\in\Re^3$, respectively. More explicitly, $\bar{R}$ denotes the linear transformation from the inertial reference frame to the estimated frame. 

The discrepancy between the true attitude $R$ and the estimated attitude $\bar{R}$ is denoted by a rotation matrix $\td{R}\in\SO$ defined as
	\begin{align} 
	\td{R}=\bar{R}^\T R,
	\end{align}
which corresponds to the linear transformation from the body-fixed frame to the estimated frame, and  $\td{R}=\I$ when $\bar{R}=R$. 

The vector $v_i^I$ representing the known, reference direction can be expressed with respect to the estimated frame to obtain,
	\begin{align} 
	v_i^E=\bar{R}^\T v_i^I. \label{eq:ve}
	\end{align}	
Note that $v_i^E=v_i^B$ when there is no estimation error, i.e., $\tilde{R}=I_{3\times 3}$. Define an error function representing the discrepancy between $v_i^E$ and $v_i^B$ as
	\begin{align} 
	\Psi_i=1-{v_i^E}^\T v_i^B, \label{eq:psiI}	
	\end{align}
which can be rewritten from \refeqn{vb} and \refeqn{ve} as
	\begin{align*} 
	\Psi_i=1-{v_i^E}^\T v_i^B =1-\tr[v_i^E{v_i^B}^\T]= 1-\tr[\bar{R}^\T v_i^I{v_i^I}^\T R].
	\end{align*}
For distinct positive constants $k_1,\ldots,k_n$, the overall configuration error function $\Psi\in\Re$ is defined as the weighted sum,
	\begin{align} 
	\Psi=\sum_{i=1}^n k_i\Psi_i =\sum_{i=1}^n k_i-\tr\Big[\bar{R}^\T K R\Big], \label{eq:Psi}
	\end{align}
where the symmetric matrix $K\in\Re^{3\times 3}$ is given by
	\begin{align} 
	K=\sum_{i=1}^n k_iv_i^I{v_i^I}^\T=K^\T.\label{eq:K}
	\end{align}
As it is symmetric, it can be decomposed into
	\begin{align} 
	K=UGU^\T, \label{eq:KUGU}
	\end{align}
where the orthogonal matrix $U\in\Re^{3\times 3}$ are composed of the normalized eigenvectors of $K$, and the diagonal matrix $G=\diag(\lam_1,\lam_2,\lam_3)$ is defined by the real non-negative eigenvalues $\lambda_1,\lambda_2,\lambda_3\in\Re$ of $K$. When $n>3$, the matrix $K$ is positive-definite, and all of the eigenvalues are strictly positive. When $n=2$, there is a single zero eigenvalue.  Without loss of generality, we can assume $U\in\SO$ by reordering. For example, if $\mathrm{det}(U)=-1$, the first two diagonal elements of $G$, and the first two columns of $U$ can be swapped such that $\mathrm{det}(U)=+1$. 

%

In addition, let the estimated gyro bias $\bar{\ga}\in\Re^3$ and the bias estimation error be defined as 
	\begin{align} 
	\td{\ga}=\ga-\bar{\ga}.
	\end{align}

\subsection{Complementary Filter}
The complementary filter~\cite{Mahony08} is defined as
	\begin{gather} 
	\dot{\bar R}=\bar{R}[(\W_y-\bar{\ga})+k_Re_R]^\wedge, \label{eq:ob1}\\ 
	\dot{\bar \ga} =-k_Ie_R,  \label{eq:ob2}\\
	e_R=\sum_{i=1}^nk_iv_i^B\t v_i^E,\label{eq:ob3}
	\end{gather}
where $k_R,k_I\in\Re$ are positive constants and $e_R\in\Re^3$ is the innovation term.

\begin{prop}{\cite{Mahony08}}\;
Consider the attitude kinematics \refeqn{eom2} with the measurements \refeqn{vb} and \refeqn{Wy}. The attitude observer given by \refeqn{ob1}, \refeqn{ob2} and \refeqn{ob3} satisfies the following properties.
\begin{itemize}
\item There are four equilibria, given by
	\begin{align} 
	(\bar{R},\,\bar\ga)\in\{(R,\,\ga),\,(UD_iU^\T R,\,\ga)\}.\label{eq:UD}
	\end{align}					
for  $D_1=\diag[1,-1,-1]$, $D_2=\diag[-1,1,-1]$ and $D_3=\diag[-1,-1,1]$.
\item The desired equilibrium $(\bar{R},\bar{\ga})=(R,\ga)$ is almost globally asymptotically stable. 
\end{itemize}
\end{prop}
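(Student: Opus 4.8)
The plan is to recast the observer error as an autonomous system on $\SO\t\Re^3$, to exhibit a single Lyapunov function whose derivative is negative semidefinite, to apply LaSalle's invariance principle both to locate the equilibria and to prove convergence, and finally to linearize at the three undesired equilibria in order to reach the \emph{almost} global conclusion. First I would introduce the left attitude error $E=\bar R R^\T\in\SO$ together with $\td\ga=\ga-\bar\ga$, and note that the configuration error \refeqn{Psi} depends on $E$ alone, since $\tr[\bar R^\T K R]=\tr[E^\T K]$ by the cyclic property. Differentiating $E$ along \refeqn{ob1} and \refeqn{eom2} and substituting $\W_y=\W+\ga$ from \refeqn{Wy}, the terms containing the true angular velocity cancel and leave $\dot E=(\bar R\td\ga+k_R\varepsilon_R)^\wedge E$, where $\varepsilon_R=\bar R e_R$. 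Applying \refeqn{widehat} and \refeqn{hat2} to the innovation \refeqn{ob3} I would derive the key identity $\hat\varepsilon_R=KE^\T-EK$, so that $\varepsilon_R$, and hence the whole error flow $(E,\td\ga)$, is autonomous; the bias error obeys $\dot{\td\ga}=k_I e_R$.

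Next I would take $V=\Psi+\tfrac{1}{2k_I}\norm{\td\ga}^2$. A short computation with \refeqn{hat1} gives $\dot\Psi=-\td\ga^\T e_R-k_R\norm{e_R}^2$, while $\tfrac{d}{dt}\tfrac{1}{2k_I}\norm{\td\ga}^2=\td\ga^\T e_R$, so the cross terms cancel and $\dot V=-k_R\norm{e_R}^2\le0$. Because each summand $\Psi_i=1-(v_i^I)^\T E^\T v_i^I\ge0$ vanishes only when $E^\T$ fixes $v_i^I$, and the reference directions span $\Re^3$ (equivalently $K$ is positive-definite), $\Psi\ge0$ with equality iff $E=\I$; hence $V$ is positive-definite about the desired equilibrium, and since $\SO$ is compact and the $\td\ga$-term is radially unbounded, $V$ is proper and every trajectory is bounded.

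Then I would invoke LaSalle on the set $\dot V=0$, equivalently $e_R=0$, i.e. the attitude obeys $KE^\T=EK$. Writing $K=UGU^\T$ as in \refeqn{KUGU} and setting $F=U^\T E U\in\SO$, the condition reduces to $FG$ being symmetric; combined with orthogonality of $F$ this forces $(FG)^2=G^2$, and when $\lam_1,\lam_2,\lam_3$ are distinct the symmetric matrix $FG$ must therefore be diagonal, giving $F=\diag(\pm1,\pm1,\pm1)$, with $\det F=1$ selecting $F\in\{\I,D_1,D_2,D_3\}$, i.e. $E\in\{\I,UD_1U^\T,UD_2U^\T,UD_3U^\T\}$. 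On the largest invariant set the identity $KE^\T-EK\equiv0$ confines the continuous curve $E(t)$ to this finite set, so $E$ is constant; then $\dot E=0$ yields $\bar R\td\ga=0$, i.e. $\td\ga=0$. This shows the invariant set is exactly the four equilibria of the first bullet, and LaSalle gives convergence to it. Since $(E,\td\ga)=(\I,0)$ is the strict minimizer of $V$, the desired equilibrium is asymptotically stable.

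The main obstacle is the second bullet, namely upgrading convergence-to-a-four-point-set into almost global asymptotic stability by proving the three undesired equilibria have measure-zero basins. Evaluating the error function there gives $\Psi=\tr[(\I-D_i)G]>0$, so they are saddle-type critical points rather than minimizers of $V$. I would linearize the autonomous error dynamics at each $(UD_iU^\T,0)$ and exhibit at least one eigenvalue with positive real part arising from the indefinite Hessian of $\Psi$; declaring these equilibria unstable and hyperbolic, the stable manifold theorem makes each stable manifold an immersed submanifold of dimension strictly below $\dim(\SO\t\Re^3)=6$, hence of Lebesgue measure zero. The union of the three null sets is still null, so almost every initial condition converges to $(\I,0)$, which together with the local asymptotic stability above yields almost global asymptotic stability. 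I expect the delicate point to be the bias coordinate $\td\ga$ in this linearization: one must verify it introduces no center (imaginary-axis) eigenvalues, so that the equilibria are genuinely hyperbolic and the stable manifold theorem applies.
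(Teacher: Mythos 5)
Your algebraic groundwork is sound: the identity $\hat\varepsilon_R=KE^\T-EK$ with $E=\bar RR^\T$, $\varepsilon_R=\bar Re_R$ is correct, $\dot V=-k_R\norm{e_R}^2$ is correct, and your classification of $\{e_R=0\}$ (symmetry of $FG$, hence $(FG)^2=G^2$, hence $F=\diag(\pm1,\pm1,\pm1)$ when the $\lam_i$ are distinct) is exactly the computation that produces the four equilibria in \refeqn{UD}. The genuine gap is the claim that the error flow $(E,\td\ga)$ is autonomous, on which both your LaSalle step and your stable-manifold step rest. It is false: in $\dot E=(\bar R\td\ga+k_R\varepsilon_R)^\wedge E$ the factor $\bar R\td\ga$ is not a function of $(E,\td\ga)$ --- $\bar R$ enters on its own --- and likewise $\dot{\td\ga}=k_Ie_R=k_I\bar R^\T\varepsilon_R$ involves $\bar R$. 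Passing to the inertial bias error $z=\bar R\td\ga$ repairs the attitude equation, $\dot E=(z+k_R\varepsilon_R)^\wedge E$, but then $\dot z=(E\,\w(t))^\wedge z+k_R\hat\varepsilon_R\, z+k_I\varepsilon_R$, where $\w(t)=R\W$ is the true inertial angular velocity: the bias-error dynamics are explicitly time-varying. Autonomy holds only when there is no bias to estimate. Consequently LaSalle's invariance principle cannot be invoked, and neither your convergence claim nor your ``largest invariant set'' computation is justified as written. The correct route --- the one used in \cite{Mahony08} and the one this paper itself uses for \refprop{delta} in the Appendix --- is Lyapunov plus Barbalat: from $\dot V=-k_R\norm{e_R}^2$ conclude $e_R\to0$ and boundedness of $\td\ga$; assuming $\W,\dot\W$ bounded, apply Barbalat again to get $\dot e_R\to0$; then use the explicit expression $\dot e_R=-(\tr[\bar R^\T KR]\I-\bar R^\T KR)(\td\ga+k_Re_R)+\hat e_R\W$ together with invertibility of $\tr[\bar R^\T KR]\I-\bar R^\T KR$ near the limit set to force $\td\ga\to0$.

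The same non-autonomy breaks your proof of the second bullet. The linearization of the error dynamics at $(UD_iU^\T,0)$ is time-varying --- the bias block carries the term $(E_\ast\,\w(t))^\wedge$ --- so ``eigenvalue with positive real part,'' hyperbolicity, and the autonomous stable manifold theorem are simply not available; the delicate point you flag (possible center eigenvalues from the bias coordinate) is beside the point, because eigenvalues of a time-varying linearization do not determine stability at all. Establishing that the basins of the undesired equilibria have measure zero in the biased, time-varying setting is precisely the hard part of the theorem, and it requires a dedicated argument of the kind given in \cite{Mahony08}; it cannot be dispatched by citing hyperbolicity. Note that this paper does not reprove Proposition 1 --- it imports it from \cite{Mahony08} --- and its own contribution deliberately sidesteps this difficulty: the hybrid observer's Lyapunov function strictly decreases by at least $\de$ at every jump, so no stable-manifold analysis of the undesired critical points is ever needed. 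Your outline does go through essentially verbatim in the bias-free case, where $\dot E=k_R\hat\varepsilon_R E$ is genuinely autonomous; to cover the stated proposition you must replace LaSalle and the stable manifold theorem with tools valid for time-varying systems.
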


In summary, this observer guarantees that the estimated attitude asymptotically converge to the true attitude for almost all cases, excluding the estimated attitudes starting from a set of zero measure that corresponds to the union of the stable manifolds to the above three undesired equilibria. While there is no chance in practice that the observer is initialized by such a \textit{thin} set which does not yield asymptotic convergence, the existence of such stable manifold to the undesired equilibria may have strong effects on the estimator dynamics~\cite{LeeLeoPICDC11}. More explicitly, the convergence rate may become very slow near the undesired equilibria, and it is particularly undesirable as undesired equilibria represent large estimation errors, i.e., the estimated attitude is opposite to the true attitude. This motivates the subsequent development for a hybrid attitude estimator with global asymptotic stability. 


\section{Global Attitude Observer on $\SO$} \label{sec:global}

Hysteresis-based switching algorithms have been introduced to achieve global asymptotic stability~\cite{MaySanITAC11,Mayhew13}, and global exponential stability~\cite{LeeGlobal,LeeITAC15} in attitude controls. In particular, the set of attitude error functions formulated in~\cite{LeeGlobal,LeeITAC15} has desirable properties that guarantee stronger global exponential stability with an explicit and compact form of stability criteria. But, it is constructed with two pre-defined reference directions that are assumed to be orthogonal with each other, and therefore, it is not suitable for the presented attitude estimation problem that is based on an arbitrary number of unit-vector measurements.  

In this section, we present a new set of synergistic attitude error functions, and utilize it for the attitude estimation problem with a gyro-bias correction. The key idea is that the error functions are designed such that the attitude estimate is expelled from the vicinity of the undesired equilibria. 


%

\subsection{Estimate Frame and Error Variables}

We first derive additional properties of the attitude error function and error variables defined at the previous section. Let $u_i\in\Sph^2$ be the $i$-th column of the matrix $U$, i.e., $U=[u_1,u_2,u_3]\in\SO$, and therefore, $u_3=u_1\times u_2$. Also, let $b_i,\bar{b}_i\in\Sph^2$ be the representations of $u_i$ with respect to the body-fixed frame, and the estimated frame, respectively, i.e., 
	\begin{align} 
	b_i=R^\T u_i \quad\text{and}\quad \bar{b}_i=\bar {R}^\T u_i. \label{eq:sRb}
	\end{align}			
Note that $b_3=b_1\times b_2$, and $\bar b_3=\bar b_1\times \bar b_2$ as well. 

The unit-vectors $b_i$ can be obtained directly from the measurements $v_i^B$ as follows. Define a symmetric matrix $K_B\in\Re^{3\times 3}$ as
\begin{align}
K_B=\sum_{i=1}^n k_i v_i^B {v_i^B}^\T = R^T K R, \label{eq:KB}
\end{align}
where \refeqn{vb} and \refeqn{K} are used for the second equality. Substituting \refeqn{KUGU}, \refeqn{sRb}, this can be written as
\begin{align}
K_B=R^T UGU^\T R \triangleq B G B^\T, \label{eq:B}
\end{align}
where the matrix $B=R^TU$ is composed of $b_i$ from \refeqn{sRb}, i.e., $B=[b_1,b_2,b_3]\in\SO$. In short, for a given set of measurements, $v_i^B$, defining the matrix $K_B$ as \refeqn{KB}, and decomposing it as \refeqn{B} yields the orthonormal unit-vectors $b_1,b_2,b_3$. Their estimates $\bar b_1,\bar b_2,\bar b_3$ are obtained by the second equation of \refeqn{sRb}. 

From \refeqn{K}, we have $\tr[K]=\sum_{i=1}^n\tr[k_i v^I_i{v^I}^\T_i]=\sum_{i=1}^n k_i$, and on the other hand, from \refeqn{KUGU}, $\tr[K]=\tr[G]=\sum_{i=1}^3\lambda_i$. Also, \refeqn{KUGU} is rewritten into $K=\sum_{i=1}^n \lambda_i u_i u_i^\T$. Using these, the attitude error function $\Psi$ defined at \refeqn{Psi} can be rearranged as
\begin{align}
\Psi&= \sum_{i=1}^3 \lambda_i - \tr[\bar R^\T \lambda_i s_i s_i^\T R] 
 =\sum_{i=1}^{3}\lam_i(1-\bar{b}_i^\T b_i). \label{eq:npsi}
\end{align}
Therefore, the attitude error function is obtained by comparing $b_i$ with $\bar b_i$.

Next, we show that the undesired equilibria defined at \refeqn{UD} can be expressed in terms of $b_i$. Since $s_i$ are column vectors of $U$, it can be shown that $UD_iU^\T=\exp(\pi\hat{u}_i)$. Therefore, the undesired equilibria can be written as
\begin{align*}
UD_iU^\T R =\exp(\pi\hat{u}_i)R=R\exp(\pi\hat{b}_i),
\end{align*}
where we have used the fact that $\exp(\widehat{Rx})=R\exp(\hat{x})R$, for any rotation matrix $R\in\SO$ and any vector $x\in\Re$. This shows that the $i$-th undesired equilibrium is obtained by rotating the true attitude about $b_i$ by $180^\circ$. The values of $b_i$ at undesired equilibria are summarized later at Table I.

\newcommand{\mb}{\m}

\subsection{Expelling Attitude Error Functions}

Next, we design a hybrid attitude observer to avoid the undesired equilibria to achieve global attractivity. We first introduce a mathematical formulation of hybrid systems as follows~\cite{GoeSanICSM09}. Let $\mathcal{M}$ be the set of discrete modes, and let $\mathcal{Q}$ be the domain of continuous states. Given a state $(\mb,\xi)\in\mathcal{M}\times \mathcal{Q}$, a hybrid system is defined by
\begin{alignat}{2}
\dot \xi & = \mathcal{F}(\mb,\xi),&\quad (\mb,\xi)&\in\mathcal{C},\label{eqn:Hyb1}\\
\mb^+ & = \mathcal{G}(\mb,\xi),& (\mb,\xi)&\in\mathcal{D},\label{eqn:Hyb2}
\end{alignat}
where the flow map $\mathcal{F}:\mathcal{M}\times\mathcal{Q}\rightarrow\mathcal{Q}$ describes the evolution of the continuous state $\xi$; the flow set $\mathcal{C}\subset\mathcal{M}\times\mathcal{Q}$ defines where the continuous state evolves; the jump map $\mathcal{G}:\mathcal{M}\times\mathcal{Q}\rightarrow\mathcal{M}$ governs the discrete dynamics; the jump set $\mathcal{D}\subset\mathcal{M}\times\mathcal{Q}$ defines where discrete jumps are permitted.


The proposed hybrid attitude observer is composed of one nominal mode and two distinct modes. Define the $i$-th nominal error function as
\begin{align}
\Psi_{N_i} = 1-\bar b_i^\T b_i,\label{eq:Ni}
\end{align}
for $i=\{1,2,3\}$, where its definition is motivated by \refeqn{npsi}. In the vicinity of the undesired equilibrium where $\bar b_1=-b_1$, the error function is switched into the following expelling error function,
\begin{align}
\Psi_{E_1} = \al+\be\bar{b}_1^\T(b_1\t b_2) =\al+\be\bar{b}_1^\T b_3,\label{eq:E1}
\end{align}
for constants $\alpha,\beta$ satisfying $1<\al<2$ and $\|\be\|<\al-1$. The attitude observer designed with the above expelling error function steers the estimated direction $\bar b_1$ toward a direction normal to $-b_1$, namely $-\frac{\beta}{|\beta|}b_1\times b_2$, such that the estimated attitude is rotated away from the undesired equilibrium. Similarly, the following expelling error function is engaged near the undesired equilibria where $\bar b_2=-b_2$,
\begin{align}
	\Psi_{E_2}=\al+\be\bar{b}_2^\T(b_1\t b_2) =\al+\be\bar{b}_2^\T b_3.\label{eq:E2}
	\end{align}

More explicitly, there are three modes $\mathcal{M}=\{\Ra,\Rb,\Rb\}$ in the proposed hybrid attitude observer, and the attitude error function for each mode is given by
	\begin{gather} 
	\Psi_\Ra(\bar R)=\lam_1\Psi_{N_1} +\lam_2\Psi_{N_2} +\lam_3\Psi_{N_3}, \label{eq:I}\\
	\Psi_\Rb(\bar R)=\lam_1\Psi_{N_1} +\lam_2\Psi_{E_2} +\lam_3\Psi_{N_3}, \label{eq:II} \\
	\Psi_\Rc(\bar R)=\lam_1\Psi_{E_1} +\lam_2\Psi_{N_2} +\lam_3\Psi_{N_3}. \label{eq:III}
	\end{gather}


Next, to formulate the switching logic of the proposed hybrid system, a variable $\rho$ is defined as the minimum attitude error among three modes,
	\begin{align} 
	\rho(\bar R)=\min_{\m\in\M}\{\Psi_\m(\bar R)\},\label{eq:rho}
	\end{align}
for $\M\in\{\Ra,\Rb,\Rc\}$. The jump map is chosen such that the discrete mode is switched into the new mode that yields the above minimum error, i.e., 
	\begin{align} 
	\G(\bar R)=\argmin_{\m\in\M}{\Psi_\m(\bar R)}=\{\m\in\M\,:\,\Psi_\m=\rho\}.\label{eq:GG}
	\end{align}
It is possible to switch whenever $\Psi_\m > \rho$. However, it may cause chattering due to measurement noise. Instead, we introduce a positive hysteresis gap $\de\in\Re$ for improved robustness, and the switching is triggered if the difference between the current configuration error and the minimum value is greater than $\de$. This leads to the following formulation of the jump set and the flow set:
	\begin{gather} 
	\D=\{(\bar{R},\m):\Psi_\m-\rho\geq\de\}, \label{eq:DD}\\
	\C=\{(\bar{R},\m):\Psi_\m-\rho<\de\}.\label{eq:CC}
	\end{gather}

\subsection{Hybrid Attitude Observer}
For positive constants $k_R,k_I$, the proposed hybrid attitude observer is defined as
	\begin{gather} 
	\dot{\bar R}=\bar{R}[(\W_y-\bar{\ga})+k_Re_H]^\wedge, \label{eq:gob1}\\ 
	\dot{\bar \ga} =-k_Ie_H,  \label{eq:gob2}\\
	e_H=\sum_{i=1}^3\lam_ie_{H_i},\label{eq:gob3}
	\end{gather}
where the $i$-th hybrid innovation terms $e_{H_i}$ are given by
\begin{align}
  e_{H_1}&=\begin{cases}
               b_1\t\bar{b}_1 & \text{if}\quad \m=\Ra,\Rb,\\
               -\be(b_3\t\bar{b}_1) & \text{if}\quad \m=\Rc,\\
            \end{cases} 
  \\
  e_{H_2}&=\begin{cases}
               b_2\t\bar{b}_2 & \text{if}\quad \m=\Ra,\Rc,\\
               -\be(b_3\t\bar{b}_2) & \text{if}\quad \m=\Rb,\\
            \end{cases}
  \\          
  e_{H_3}&= b_3\t\bar{b}_3  \qquad\qquad\text{for}\quad \m=\Ra,\Rb,\Rc, \label{eq:eH3}
\end{align}
which are obtained by taking the derivatives of the attitude error functions defined at \refeqn{I}-\refeqn{III}. Note that the observer in the nominal mode $\m=\Ra$ is the same as \refeqn{ob1}-\refeqn{ob3}. 

\begin{prop} \label{prop:delta}
Consider the attitude kinematics \refeqn{eom2} with the measurements \refeqn{vb} and \refeqn{Wy}. The hybrid attitude observer is defined by \refeqn{rho}-\refeqn{eH3}, where $1<\al<2$ and $|\be|<\al-1$. Choose the hysteresis gap $\de$ such that 
	\begin{align} 
	0<\de<\min\{\lam_1,\,\lam_2\}\min\{2-\al,\,\al-|\be|-1\}, \label{eq:de}
	\end{align}
where $\lambda_i$ is defined at \refeqn{KUGU}. Then, the desired equilibrium $(\bar{R},\bar{\ga})=(R,\ga)$ is globally asymptotically stable, and the number of discrete jumps is finite.
\end{prop}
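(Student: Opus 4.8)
The plan is to recast the observer as a hybrid system in the error variables $(\td R,\td\ga,\m)$ with $\td R=\bar R^\T R$ and $\td\ga=\ga-\bar\ga$, and to build a single hybrid Lyapunov function that decreases along flows and drops by a fixed amount at every jump. First I would derive the error kinematics: differentiating \refeqn{sRb} and substituting \refeqn{gob1}--\refeqn{gob3} with $\W_y-\bar\ga=\W+\td\ga$ gives $\dot b_i=b_i\t\W$ and $\dot{\bar b}_i=\bar b_i\t(\W+\td\ga+k_Re_H)$, so that $\frac{d}{dt}(\bar b_i^\T b_i)=(\td\ga+k_Re_H)\cdot(b_i\t\bar b_i)$ and $\frac{d}{dt}(\bar b_i^\T b_3)=(\td\ga+k_Re_H)\cdot(b_3\t\bar b_i)$. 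Because each hybrid innovation term is exactly the (negated) gradient of the active error function, summing the $\lam_i$-weighted terms yields the mode-independent identity $\dot\Psi_\m=-(\td\ga+k_Re_H)\cdot e_H$.

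Next I would take $V=\Psi_\m(\bar R)+\frac{1}{2k_I}\norm{\td\ga}^2$. Since $\dot{\td\ga}=k_Ie_H$, the term $\frac1{k_I}\td\ga\cdot\dot{\td\ga}=\td\ga\cdot e_H$ cancels the cross term above, leaving $\dot V=-k_R\norm{e_H}^2\le0$ along flows, uniformly in the mode. At a jump, $\bar R$ and $\td\ga$ are frozen while the mode is reset by \refeqn{GG} to the minimizer, so $V^+-V=\rho-\Psi_\m\le-\de$ by \refeqn{DD}. As every $\Psi_\m\ge0$ (note $\Psi_{E_1},\Psi_{E_2}\ge\al-|\be|>1$ under the hypotheses) and $\norm{\td\ga}^2\ge0$, $V$ is bounded below; being non-increasing on flows and dropping by at least $\de$ at each jump, it admits at most $V(0)/\de$ jumps. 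This gives finiteness of jumps and excludes Zeno behaviour, so after some time $T$ the mode is frozen at a value $\m^*$ and the trajectory flows in the flow set of $\m^*$ for all $t\ge T$.

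For attractivity I would integrate $\dot V=-k_R\norm{e_H}^2$ to get $e_H\in L_2$; since $\W$ and $\td\ga$ are bounded, $\dot e_H$ is bounded, and Barbalat's lemma gives $e_H\to0$, driving the trajectory to the largest invariant set of the frozen flow inside $\{e_H=0\}$. In the nominal mode $\m^*=\Ra$ the identity $\hat e_H=\td R K_B-K_B\td R^\T=0$ pins $\td R$ to the critical points $\td R=\I$ and the three undesired rotations $\exp(\pi\hat b_i)$; evaluating \refeqn{I}--\refeqn{III} at the latter (where $b_i\perp b_3$ forces every expelling term to equal $\al$) gives $\Psi_\Ra-\rho\ge\lam_i(2-\al)>\de$ by \refeqn{de}, so each undesired equilibrium lies in $\D$ and is excluded from the flow set, leaving only $\td R=\I$. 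On $\{\td R=\I\}$ one has $\bar b_i=b_i$, and equating $\dot{\bar b}_i=\dot b_i$ yields $b_i\t\td\ga=0$ for $i=1,2,3$; since $b_1,b_2,b_3$ span $\Re^3$ this forces $\td\ga=0$.

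Finally I would rule out $\m^*\in\{\Rb,\Rc\}$. Each expelling error function is affine in $\td R$, so minimizing $\Psi_\Rc$ over $\SO$ is a Wahba-type problem whose minimizer is a rotation about $b_2$; evaluating there one computes $\Psi_\Rc-\Psi_\Ra\ge\lam_1(\al-|\be|-1)>\de$ by \refeqn{de}, placing it in $\D$, and $\Rb$ is symmetric. Hence a frozen expelling mode would steer the trajectory into its jump set, contradicting the absence of further jumps; therefore $\m^*=\Ra$ and $(\bar R,\bar\ga)\to(R,\ga)$ from every initial condition. Stability of the desired equilibrium follows from $V$ being a proper Lyapunov function minimized there, and combining it with the global attractivity just established yields global asymptotic stability. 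The hard part is precisely these last two steps: the error system is non-autonomous through $\W(t)$ and $K_B(t)$, so the invariant-set argument must be executed via a Barbalat/hybrid-invariance route rather than classical LaSalle, and one must confirm that \emph{all} critical points of the expelling error functions---not merely their global minima, for which the $\al-|\be|-1$ margin is exhibited above---are separated from $\rho$ by at least $\de$.
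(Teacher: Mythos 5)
Your skeleton coincides with the paper's proof: the same Lyapunov function $\V_\m=\Psi_\m+\frac{1}{2k_I}\|\td\ga\|^2$, the same mode-independent identities $\dot\Psi_\m=-(\td\ga+k_Re_H)^\T e_H$ and $\dot\V=-k_R\|e_H\|^2$, the jump decrease $\V^+-\V=\rho-\Psi_\m\le-\de$ with the resulting $\V(0)/\de$ bound on the number of jumps, and the mechanism that undesired critical points lie in the jump set $\D$. The genuine gap is the bias convergence $\td\ga\to 0$. Your step ``on $\{\td R=\I\}$ equate $\dot{\bar b}_i=\dot b_i$ to get $b_i\t\td\ga=0$'' is a LaSalle-type invariance argument, and, as you note yourself, it is not available here: the error dynamics are non-autonomous through $\W(t)$, and the pointwise limits $e_H\to0$, $\td R\to\I$ do not license evaluating the vector field \emph{on} the limit set. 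Since $\dot\V=-k_R\|e_H\|^2$ never penalizes $\td\ga$, nothing you wrote excludes $\td\ga$ converging to a nonzero constant while $\bar R\to R$. The paper closes exactly this hole with a \emph{second} Barbalat application: assuming $\|\W\|,\|\dot\W\|$ bounded, $\|\ddot e_H\|$ is bounded, hence $\dot e_H\to0$; then the explicit computation
\begin{align*}
\dot e_H = -\parenth{\tr[\bar R^\T KR]\I-\bar R^\T KR}(\td\ga+k_Re_H)+\hat e_H\W
\end{align*}
yields $\parenth{\tr[\bar R^\T KR]\I-\bar R^\T KR}\td\ga\to0$, and since $\td R\to\I$ forces this matrix to converge to a full-rank matrix (conjugate to $\tr[G]\I-G$, with eigenvalues $\lam_2+\lam_3,\,\lam_1+\lam_3,\,\lam_1+\lam_2>0$), one concludes $\td\ga\to0$. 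You gesture at ``a Barbalat/hybrid-invariance route'' but do not execute it; this computation and the rank argument are the missing content, and without them you have only proved attractivity of $\bar R=R$, not of the equilibrium $(\bar R,\bar\ga)=(R,\ga)$ claimed in the proposition.

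The second, smaller gap is the one you flag at the end: your exclusion of a frozen expelling mode checks a single critical point of $\Psi_\Rc$ (the $90^\circ$ rotation about $b_2$, margin $\lam_1(\al-|\be|-1)$). But when the mode is frozen at $\m^*$ and $e_H\to0$, the attitude error only approaches \emph{some} critical point of $\Psi_{\m^*}$, not necessarily its minimizer; so the argument requires $\Psi_{\m^*}-\rho\ge\de$ at \emph{every} critical point of $\Psi_\Rb$ and $\Psi_\Rc$ (four each), in addition to the three undesired critical points of $\Psi_\Ra$ that you did verify. The paper does precisely this enumeration (its Table I), and that enumeration is what calibrates the two terms $2-\al$ and $\al-|\be|-1$ appearing in \refeqn{de}; flagging the check as outstanding is not the same as performing it, and the proof of global attractivity is incomplete until it is done.
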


\begin{proof}
See Appendix A.
\end{proof}

The proposed hybrid attitude observer guarantees that the estimated attitude and the estimated gyro bias asymptotically converge to their true values globally. Compared with~\cite{Batista12,Batista14A}, these are constructed directly on $\SO$ such that the estimated attitude $\bar R$ lies in $\SO$ always. Furthermore, the proposed hybrid attitude observer exhibits substantially better convergence rate than other attitude observers guaranteeing almost global asymptotic stability, such as~\cite{Mahony08}, at certain cases as illustrated below.


\section{Numerical Analysis}

General purpose numerical integration techniques, such as the Runge-Kutta method, are commonly applied to implement any estimation algorithm numerically. However, conventional numerical integrators are not suitable for the proposed attitude observer, as the attitude estimate computed such numerical integrators are not orthogonal in general~\cite{Hairer2000}, thereby yielding geometrically incorrect attitude estimates. Here, we present a numerical algorithm for the proposed attitude observer, to ensure that the corresponding attitude estimates evolve on the special orthogonal group numerically.  


\subsection{Geometric Numerical Integration}

The proposed algorithm is based on the Lie group~\cite{IseMunAN00}, where the attitude estimates are updated by the group operation of $\SO$, namely the matrix multiplication to ensure that they evolve on $\SO$ numerically.

First, define the estimate angular velocity $\bar{\W}\triangleq\W_{y}-\bar{\ga}+k_Re_{H}\in\Re^3$. Let $\bar{R}_{n}$ and $\bar{\W}_n$ denote the estimate attitude and angular velocity at the $n$-th time step, respectively. The intermediate updates are given by
	\begin{gather*} 
	\bar{R}_{n+1}'=\exp^{hK_{{\bar R}_1}}\bar{R}_{n},\quad \bar\ga_{n+1}'=\bar\ga_n+hK_{1_\ga},
	\end{gather*}
where $h\in\Re$ is the step size, and the intermediate increments $K_{{\bar R}_1}\in\Re^{3\t3}$ and $K_{{\bar\ga}_1}\in\Re^3$ are defined as
	\begin{gather*} 
	K_{{\bar R}_1} =(\bar{R}_{n}\bar{\W}_n)^\wedge,\quad K_{{\bar\ga}_1}=-k_Ie_{H_n}(R_n,\bar{R}_n),
	\end{gather*}
In contrast to the conventional Runge-Kutta method, that would yield $\bar{R}_{n+1}'=\bar{R}_{n}+hK_{{\bar R}_1}$, here $\bar{R}_{n+1}'$ is constructed by the product of two rotation matrices to ensure $\bar{R}_{n+1}'$ is orthogonal numerically.



Next, the second stage increments are evaluated by
	\begin{gather*} 
	K_{{\bar R}_2}=(\bar{R}_{n+1}'\bar{\W}_{n+1}')^\wedge, \quad K_{\bar\ga_2}=-k_Ie_{H_{n+1}}(R_{n+1},\bar{R}_{n+1}'),
	\end{gather*} 
where $\bar{\W}_{n+1}'=\W_{y_{n+1}}-\bar{\ga}_{n+1}'+k_Re_{H_{n+1}}\in\Re^3$. Combining the increments from both of the stages, we can obtain $\bar{R}$ and $\bar\ga$ for the $(n+1)$-th step:
	\begin{gather*} 
	\bar{R}_{n+1}=\exp^{\frac{1}{2}h(K_{{\bar R}_1}+K_{{\bar R}_2})}\bar{R}_{n}, \\ \bar{\ga}_{n+1}=\bar\ga_n+\frac{1}{2}h(K_{\bar{\ga}_1}+K_{\bar{\ga}_2}).
	\end{gather*} 
These correspond to the second-order Crouch-Crossman method~\cite{Hairer2000}.


\subsection{Numerical Examples} \label{subsec:num}
We select three linearly independent vectors in the inertial frame, $v_i^I=v_i'^I/\|v_i'^I\|$, for $i={1,2,3}$, where $v_1'^I=[-2~5~2]^\T$  $v_2'^I=[10~-1~0]^\T$ and $v_3'^I=[0~1~-2]^\T$. The true attitude trajectory is selected in terms of 3-2-1 Euler angles as
	\begin{align*} 
	R(\mathfrak{a}(t),\mathfrak{b}(t),\mathfrak{c}(t)) =\exp(\mathfrak{a}\hat{e}_3)\exp(\mathfrak{b}\hat{e}_2)\exp(\mathfrak{c}\hat{e}_1),
	\end{align*}
where $\mathfrak{a}=\sin(0.5t)$, $\mathfrak{b}=2\sin(t)$, $\mathfrak{c}=\cos(2t)-3$. These represent a non-trivial complex rotational maneuver.  The initial value of estimate attitude and estimate bias are given by 
	\begin{align*} 
	\bar{R}(0)=\left[\begin{array}{ccc}0.2527&-0.8907&-0.3779\\ 0.6381&0.4470&-0.6270\\ 0.7273&-0.0827&-0.6813\end{array}\right],
	\end{align*}	
and $\bar\ga=[0.0997~ -0.1042~ 0.2027]^\T$, respectively. The controller parameters are chosen as $k_1=1.211$, $k_2=1.21$, $k_3=1.209$, $k_R=1 $, $k_I=0.25$, $\al=1.9$ and $\be=0.899$.

The simulation results for the smooth complementary attitude filter~\cite{Mahony08}, and the proposed hybrid attitude observer are illustrated at Figure \ref{fig:ob-2} for the first case when there is no gyro bias, i.e., $\gamma=0$, and at Figure \ref{fig:ob-1} for the second case with a non-zero gyro bias, $\ga=[0.1~-0.1~0.2]^\T$, respectively. They are computed via the proposed numerical integration algorithm with the time step $h=0.05~\mathrm{sec}$. For both cases, the attitude estimation error of the complementary filter does not change for the first few seconds even though the estimate error is very large, but the proposed hybrid attitude observer exhibits a substantially faster convergence rate by engaging the third, switching mode initially and switching back to the normal mode later at $t=1.40$ seconds and $t=1.15$ seconds, respectively. 

We further compare the proposed geometric numerical integration algorithm to a conventional 4/5th Runge-Kutta method for the second case. Simulation results at Figure~\ref{fig:compare} show that two methods perform well over the first 20 seconds. However, the attitude estimation error of the  Runge-Kutta method increases noticeably later. This is caused by the fact that the computed $\bar R$ deviates from $\SO$, and such error may accumulate as illustrated in Figure~\ref{fig:compare}(b). 

\begin{figure}[h]
\centerline{
	\subfigure[Attitude estimation error $\|\bar{R}-R\|$]{\includegraphics[width=0.53\columnwidth]{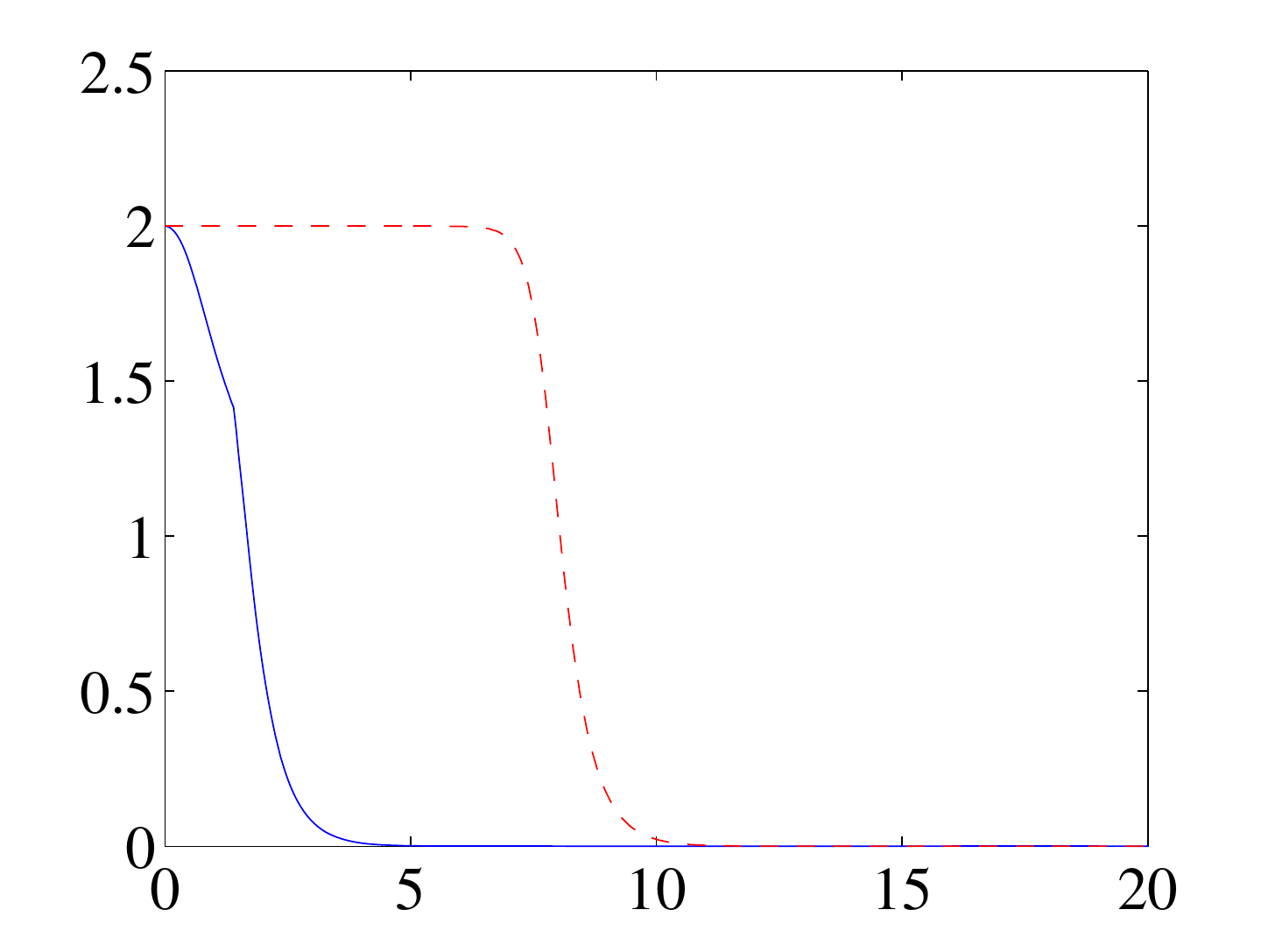}}
	\hspace*{-0.015\columnwidth}
	\subfigure[Mode change]{\includegraphics[width=0.53\columnwidth]{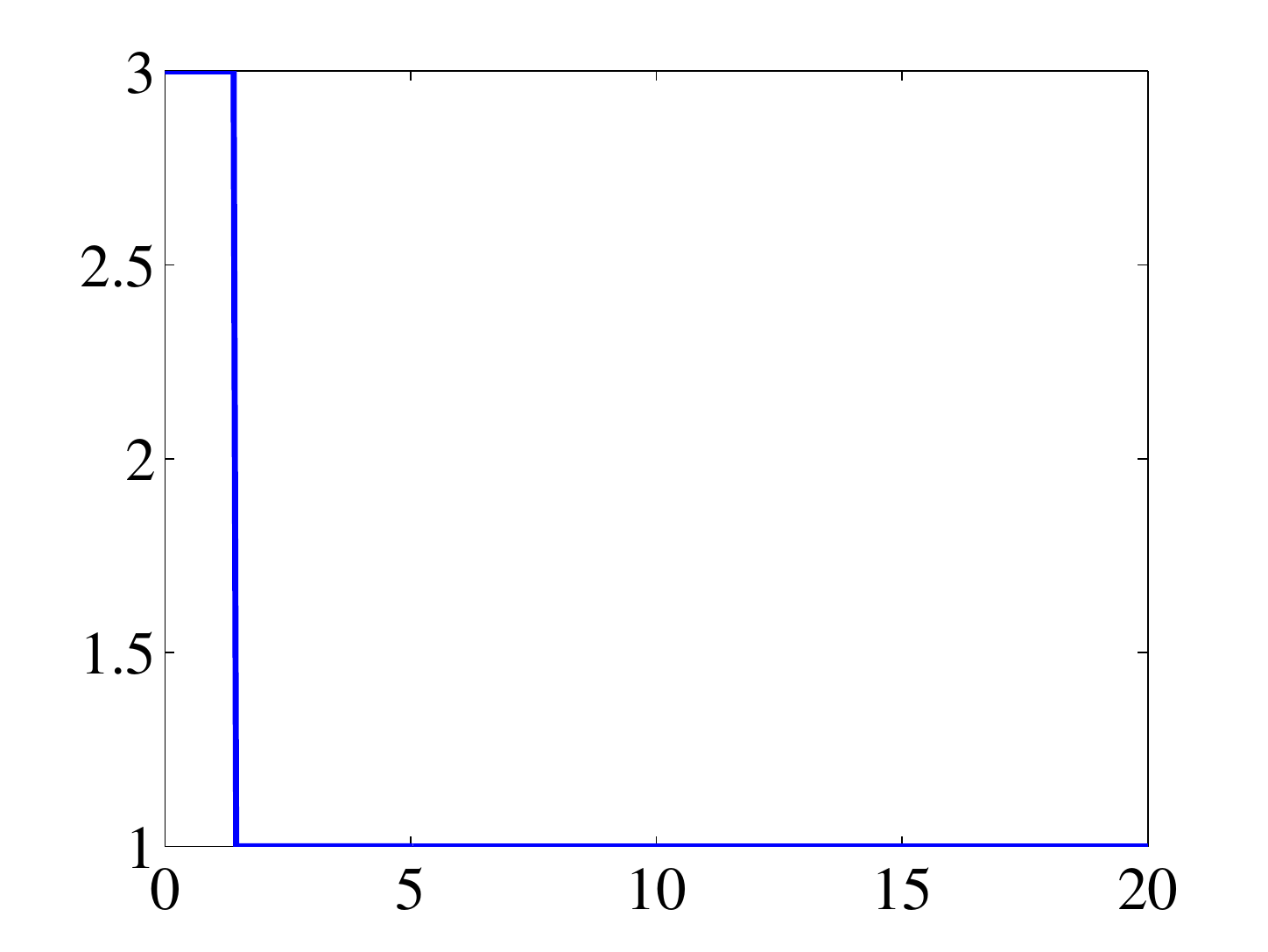}}
}	
\centerline{
	\subfigure[Innovation term $e_H(e_R)$]{\includegraphics[width=0.53\columnwidth]{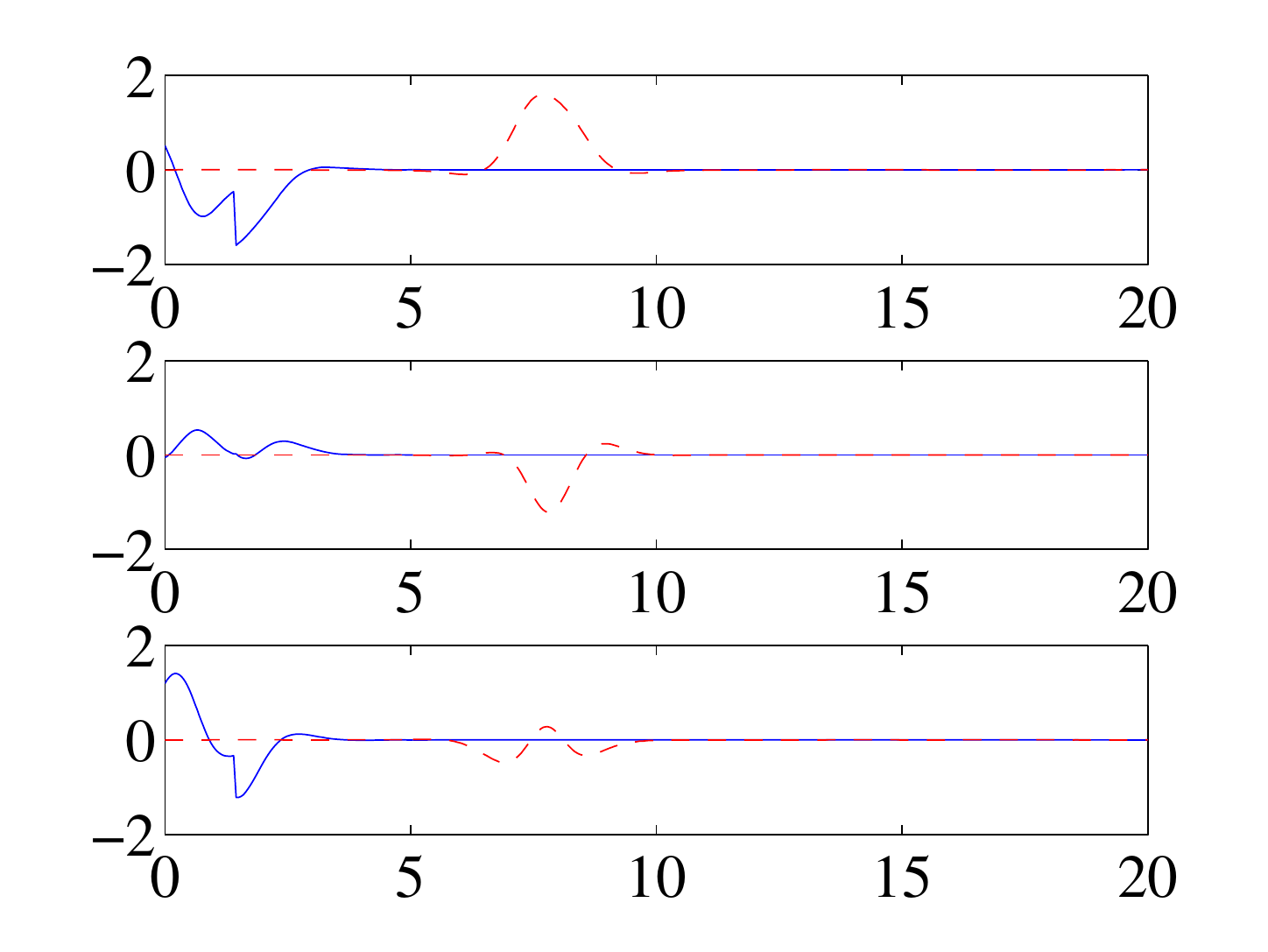}}
}
\caption{Attitude estimation without gyro bias (blue: hybrid observer, red: smooth complementary observer).}\label{fig:ob-2}
\end{figure}

\begin{figure}[h]
\centerline{
	\subfigure[Attitude estimation error $\|\bar{R}-R\|$]{\includegraphics[width=0.53\columnwidth]{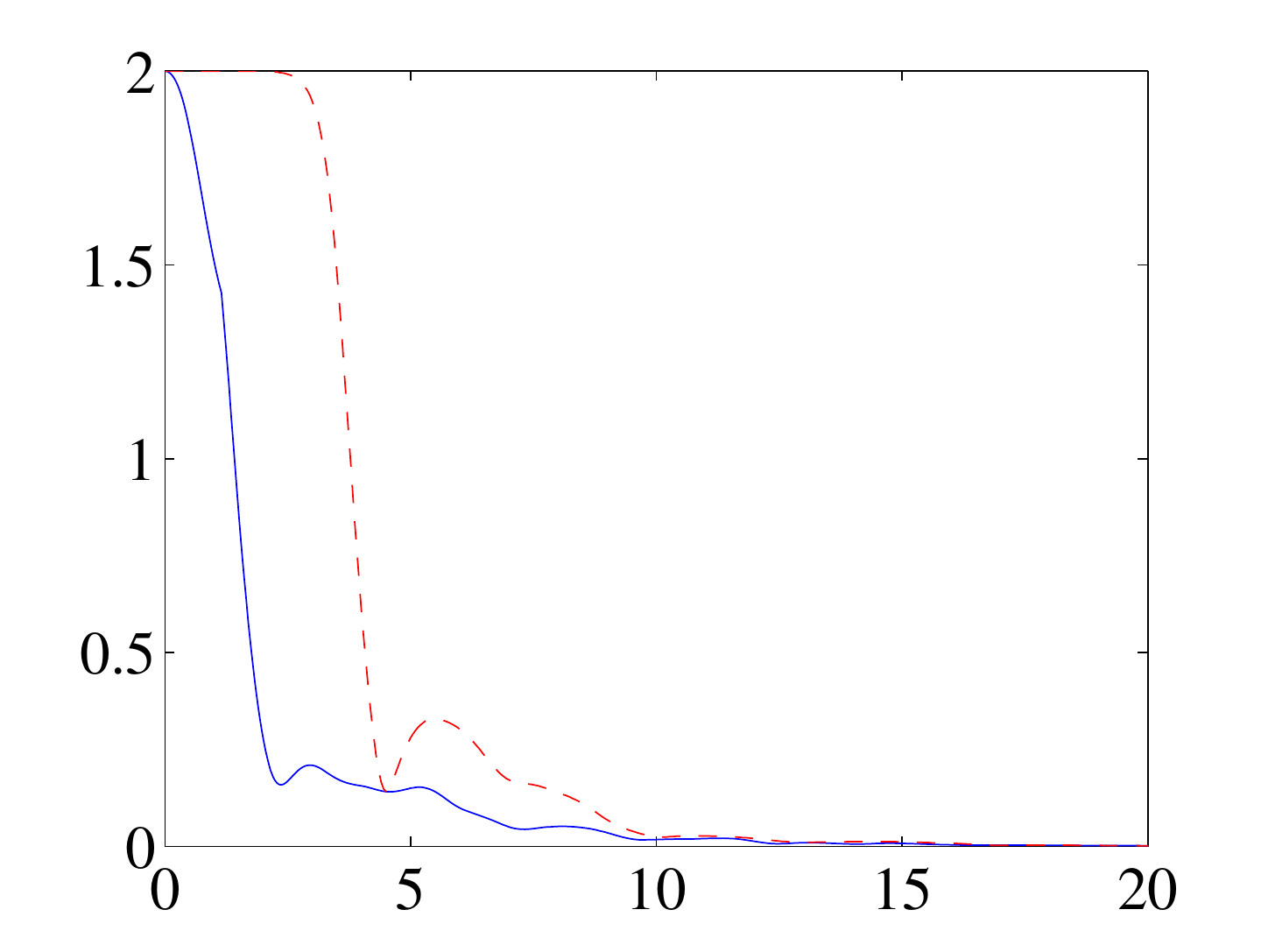}}
	\hspace*{-0.015\columnwidth}
	\subfigure[Mode change]{\includegraphics[width=0.53\columnwidth]{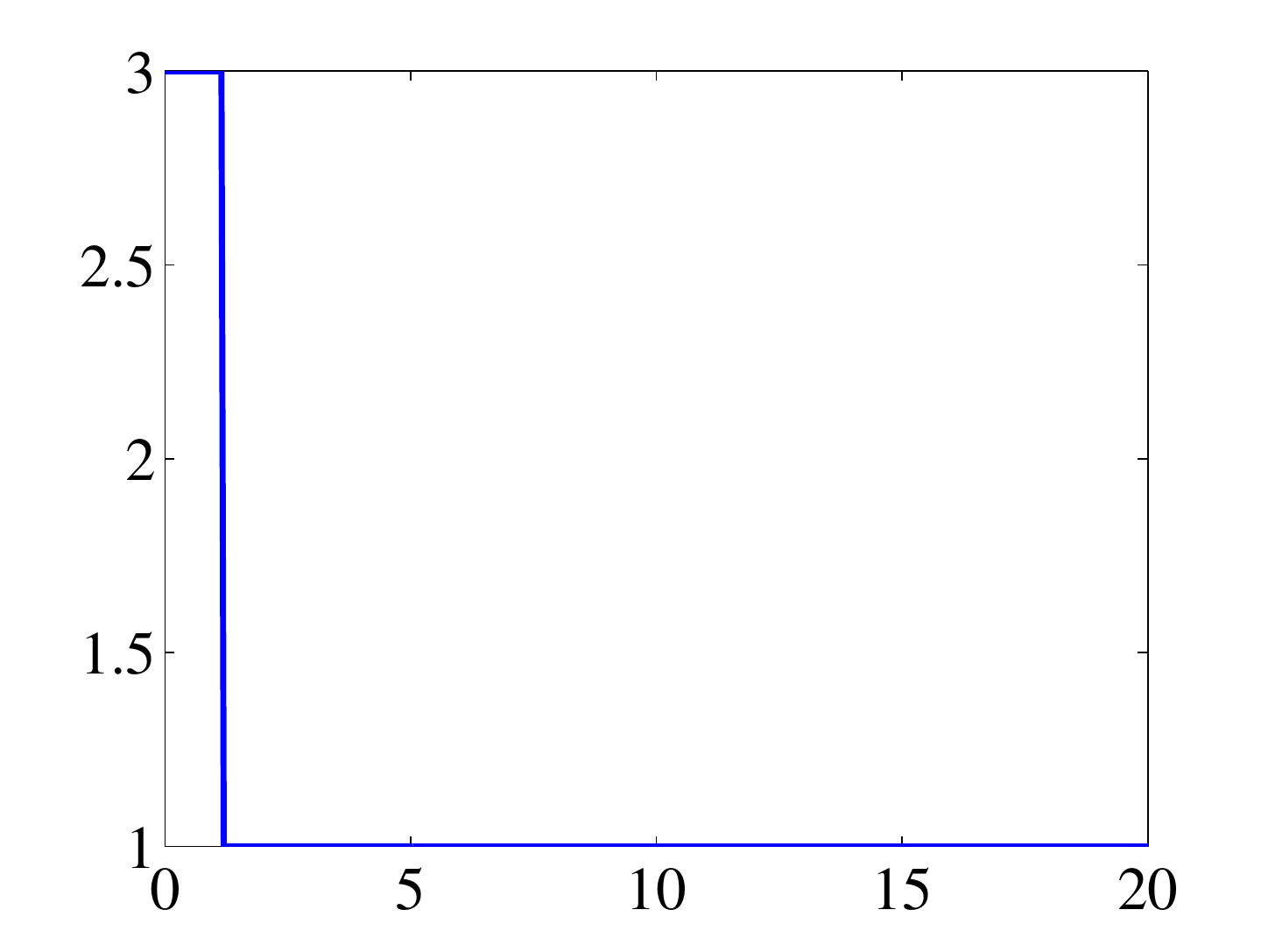}}
}	
\centerline{
	\subfigure[Innovation term $e_H(e_R)$]{\includegraphics[width=0.53\columnwidth]{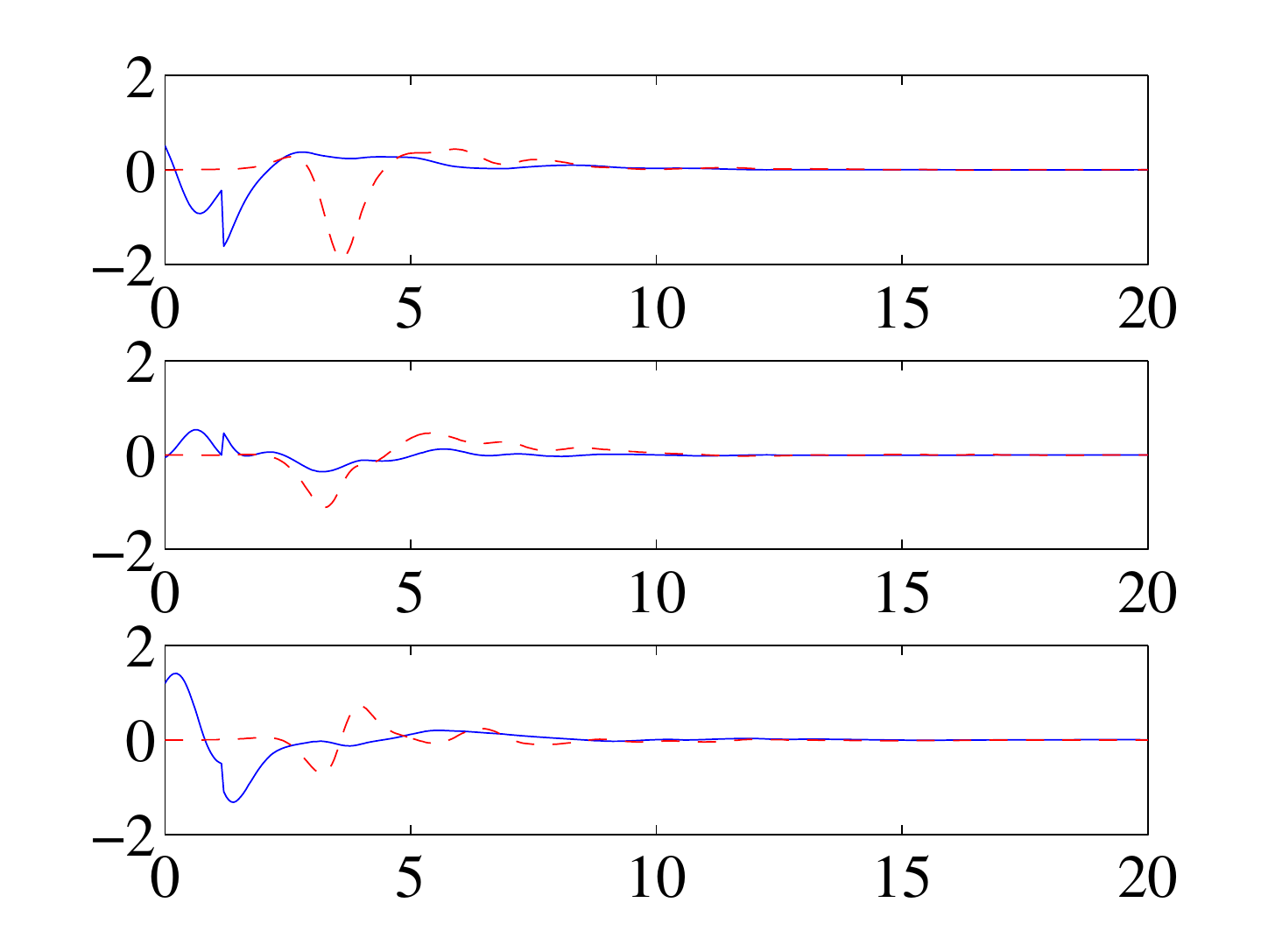}}
	\hspace*{-0.015\columnwidth}
	\subfigure[Gyro bias error $\td\ga$]{\includegraphics[width=0.53\columnwidth]{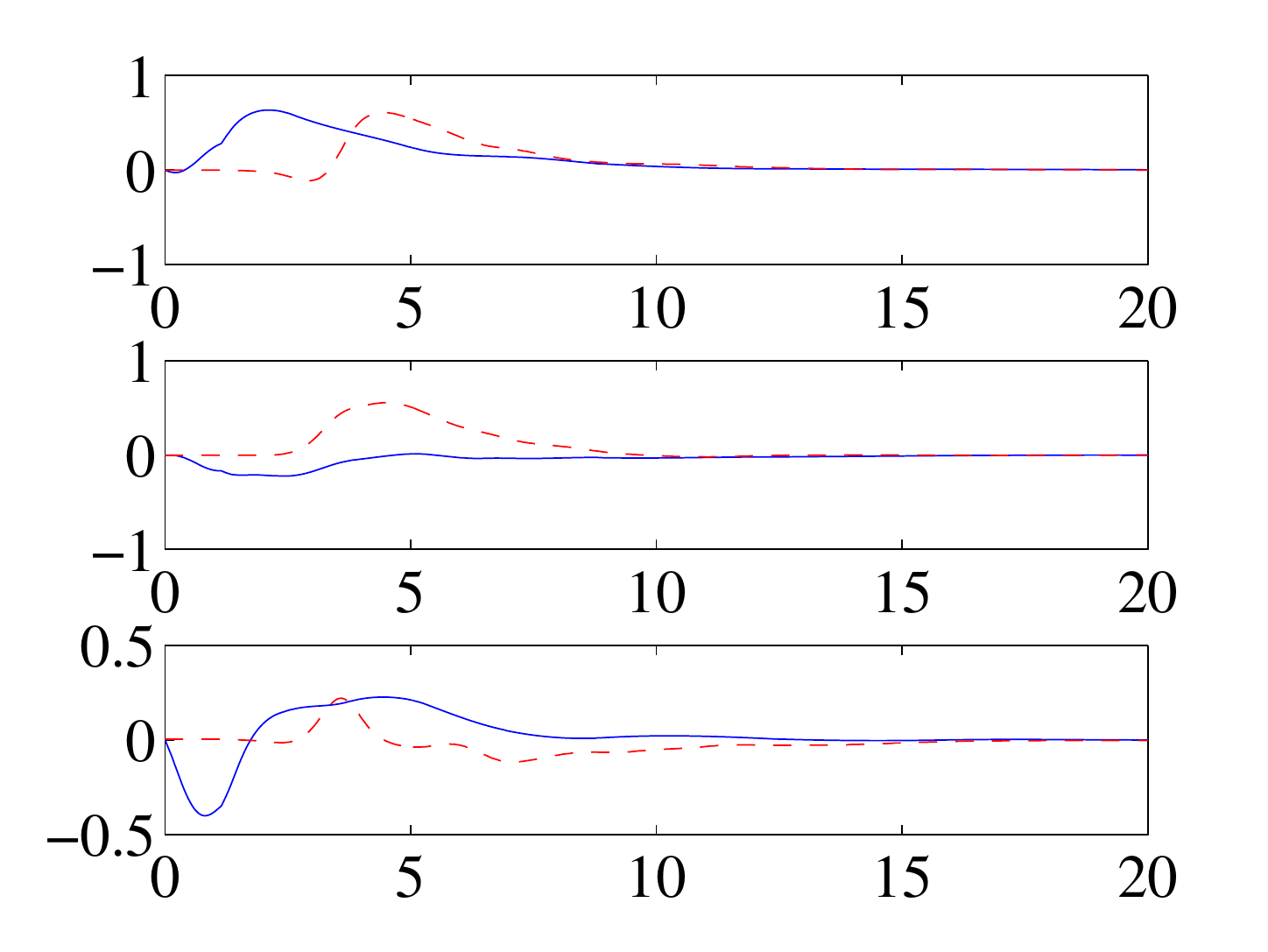}}
}
\caption{Attitude estimation with gyro bias (blue: hybrid observer, red: smooth complementary observer).}\label{fig:ob-1}
\end{figure}

\begin{figure}[h]
\centerline{
	\subfigure[60s]{\includegraphics[width=0.53\columnwidth]{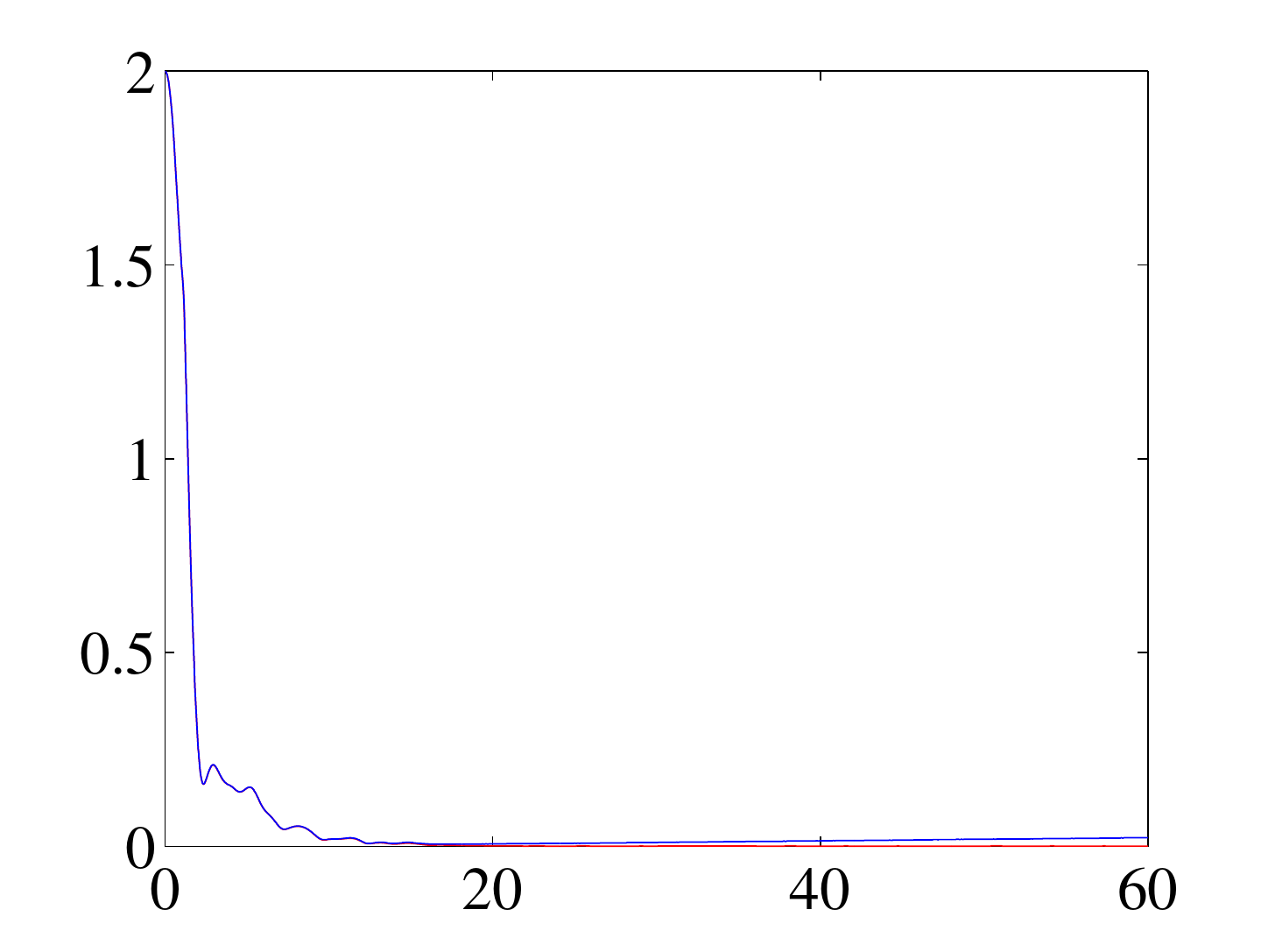}}
	\hspace*{-0.015\columnwidth}
	\subfigure[150s]{\includegraphics[width=0.53\columnwidth]{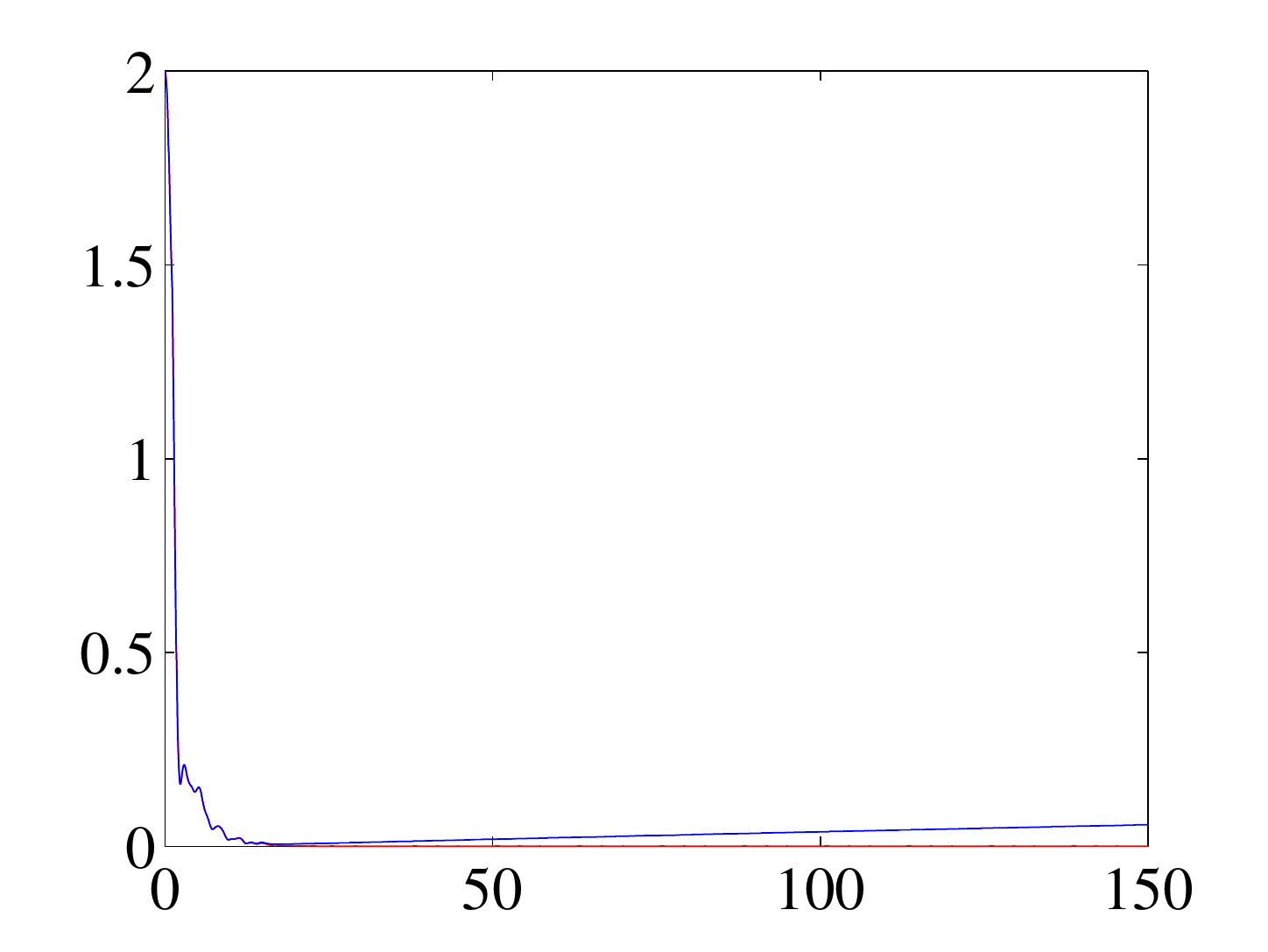}}
}	
\caption{Attitude estimation error $\|\bar{R}-R\|$ (blue: MATLAB ode45, red: geometric Runge-Kutta method).}\label{fig:compare}
\end{figure}

\section{Experimental Verification}

We also demonstrate the desirable properties of the proposed hybrid attitude observer via a fully-actuated UAV platform described in~\cite{KauCalLeeLee14}, which is attached to a spherical joint such that three attitude degrees of freedom are controlled. The control system serves to stabilize the UAV such that the mass center of the UAV is directly above the spherical joint for a fixed desired attitude $R_d=I_{3\times 3}\in\SO$, which is in fact unstable without a controller (see Figure ~\ref{fig:hexrotor}). Therefore, accurate attitude estimation is crucial to stabilizing the controlled system at this equilibrium.

\begin{figure}[h] 
\centerline{
	\hspace*{0.015\columnwidth}
	\subfigure[Initial attitude]{\includegraphics[width=0.48\columnwidth]{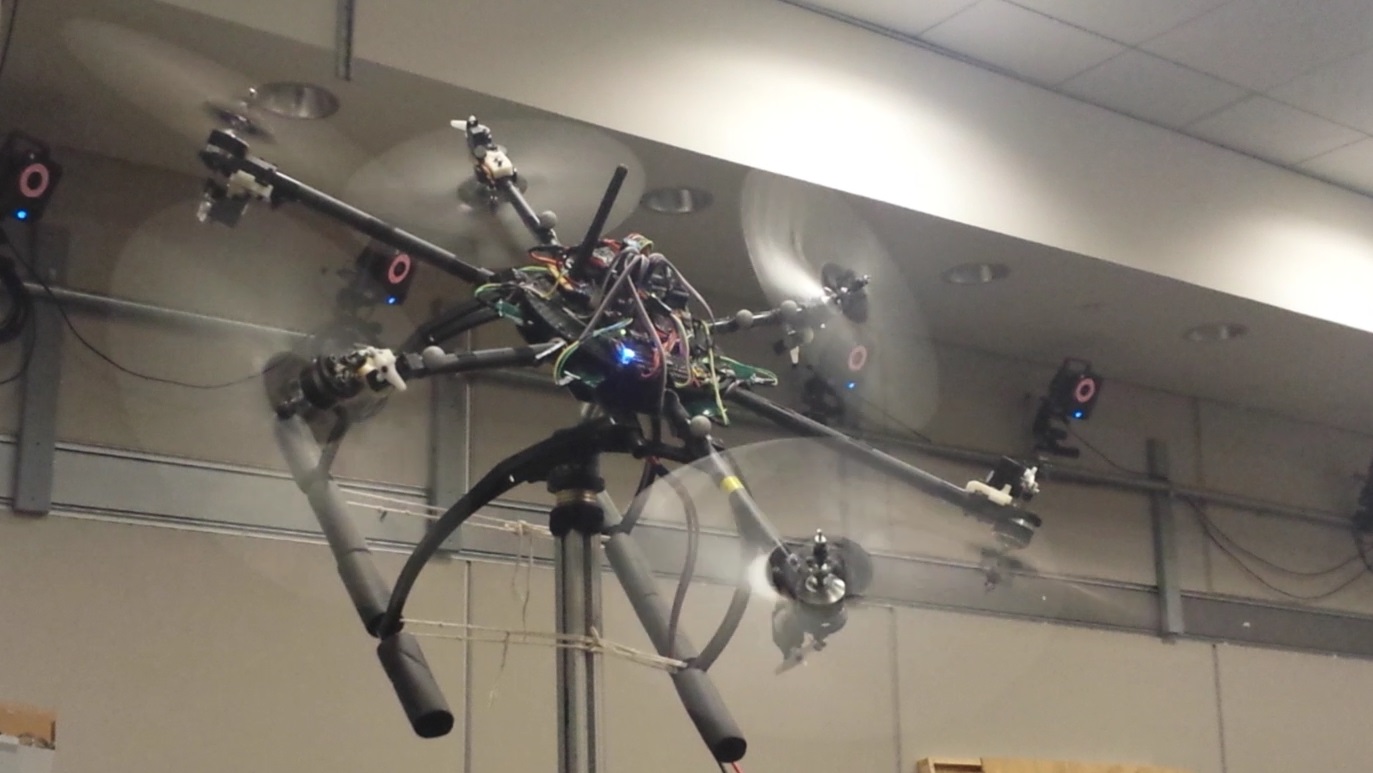}}
	\subfigure[Stabilized attitude]{\includegraphics[width=0.48\columnwidth]{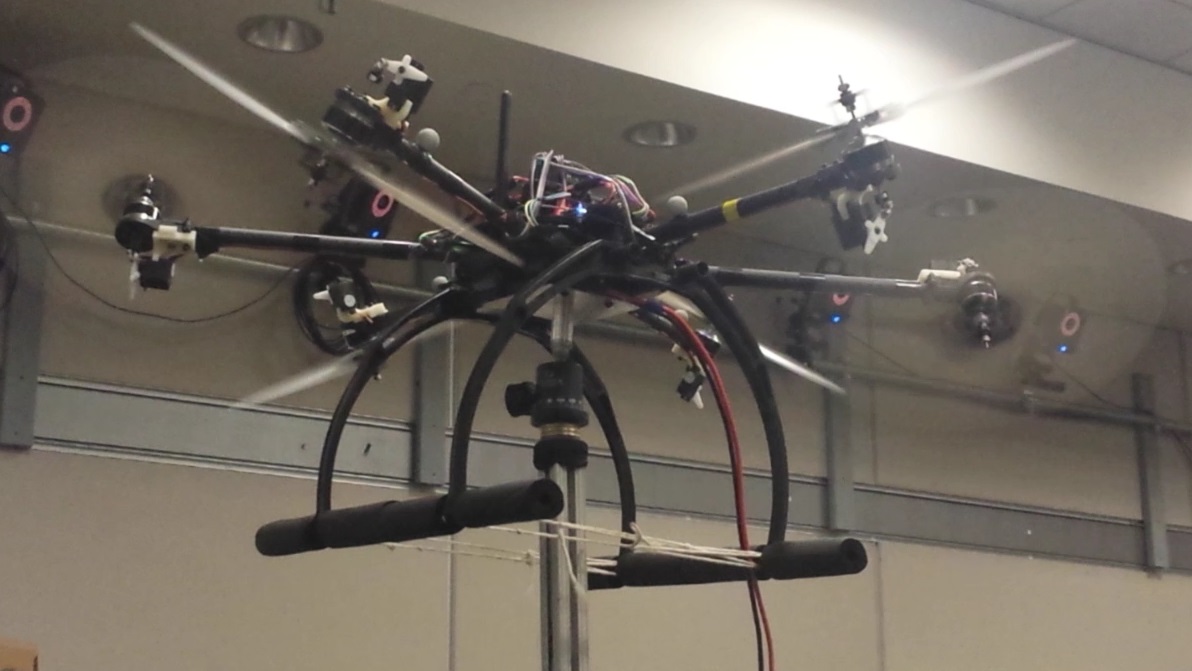}}
}	
\caption{The observer must provide accurate attitude estimates to a control system designed to stabilize the UAV above a spherical joint.}\label{fig:hexrotor}
\end{figure}


We apply both the smooth complementary attitude observer~\cite{Mahony08} and the proposed hybrid attitude observer to control the attitude, where the estimated attitude and the angular velocity measured by an IMU are used to compute the control input. 


\begin{figure}[h]
\centerline{
	\hspace*{0.015\columnwidth}
	\subfigure[Attitude estimation error $\|\bar{R}-R\|$]{\includegraphics[width=0.53\columnwidth]{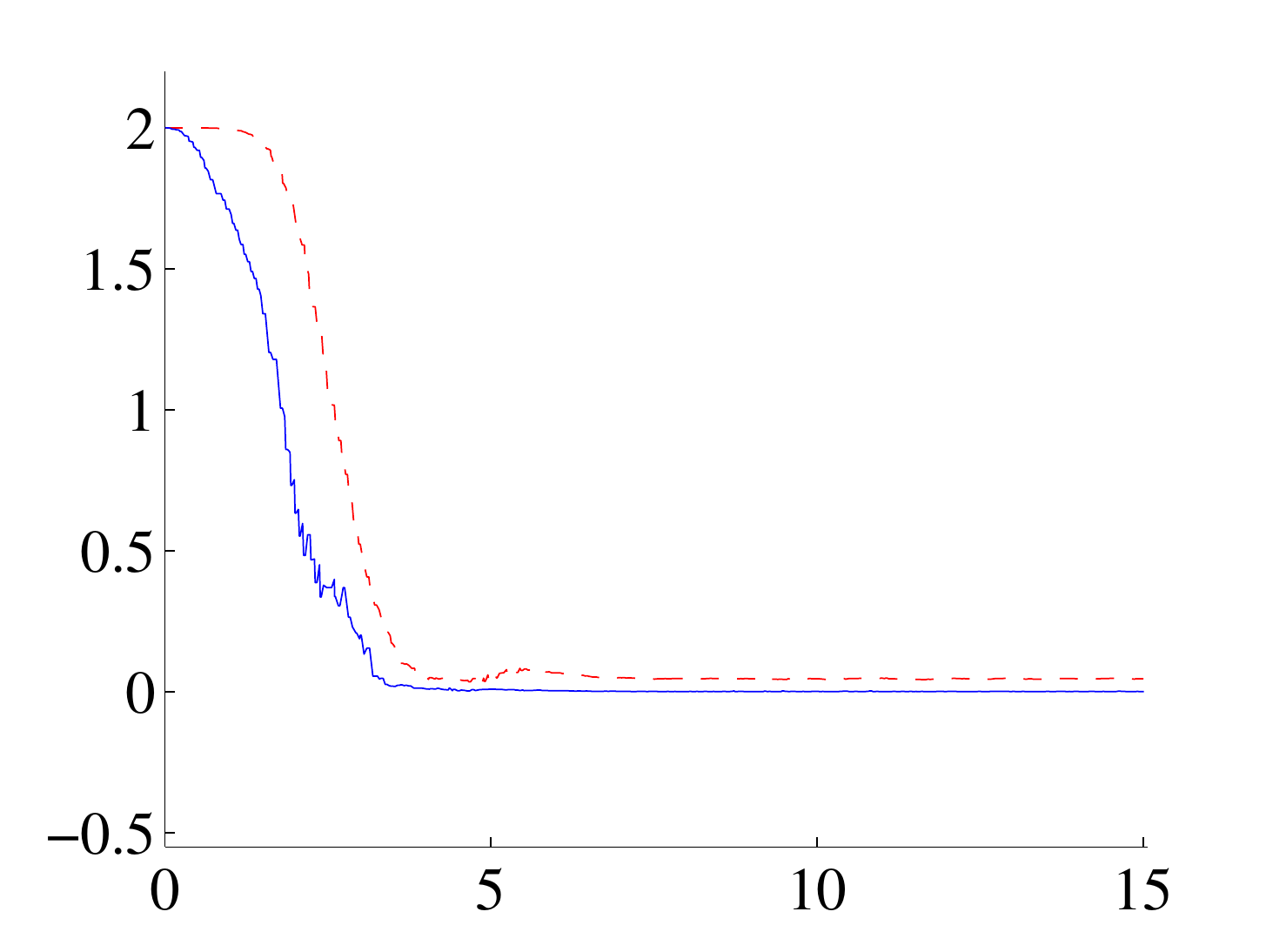}\label{fig:expdataAttErr}}
	\hspace*{-0.015\columnwidth}	
	\subfigure[Mode change]{\includegraphics[width=0.53\columnwidth]{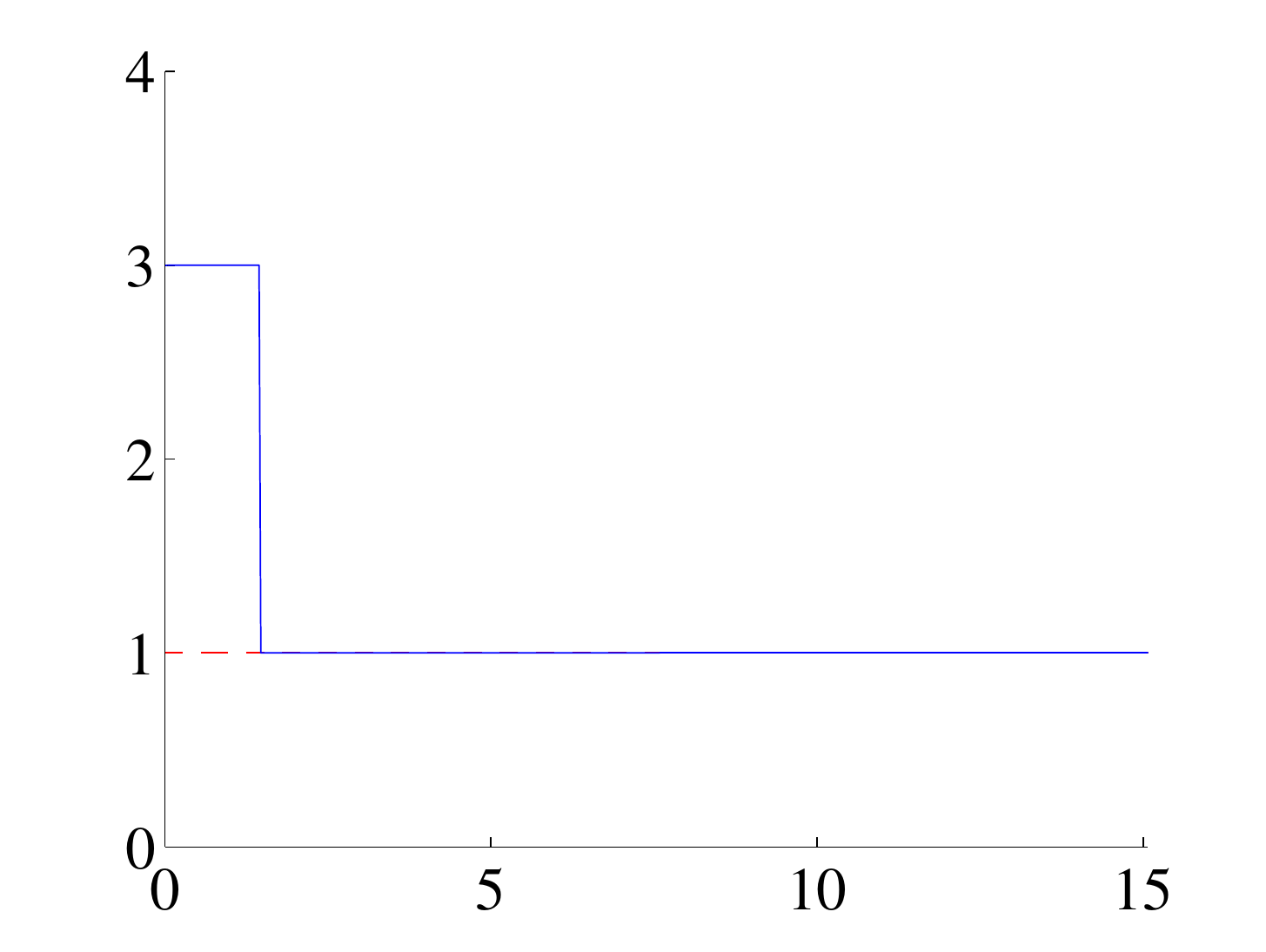}\label{fig:expdatamode}}
}	
\centerline{
	\subfigure[Hybrid innovation term $e_H$]{
		\includegraphics[width=0.53\columnwidth]{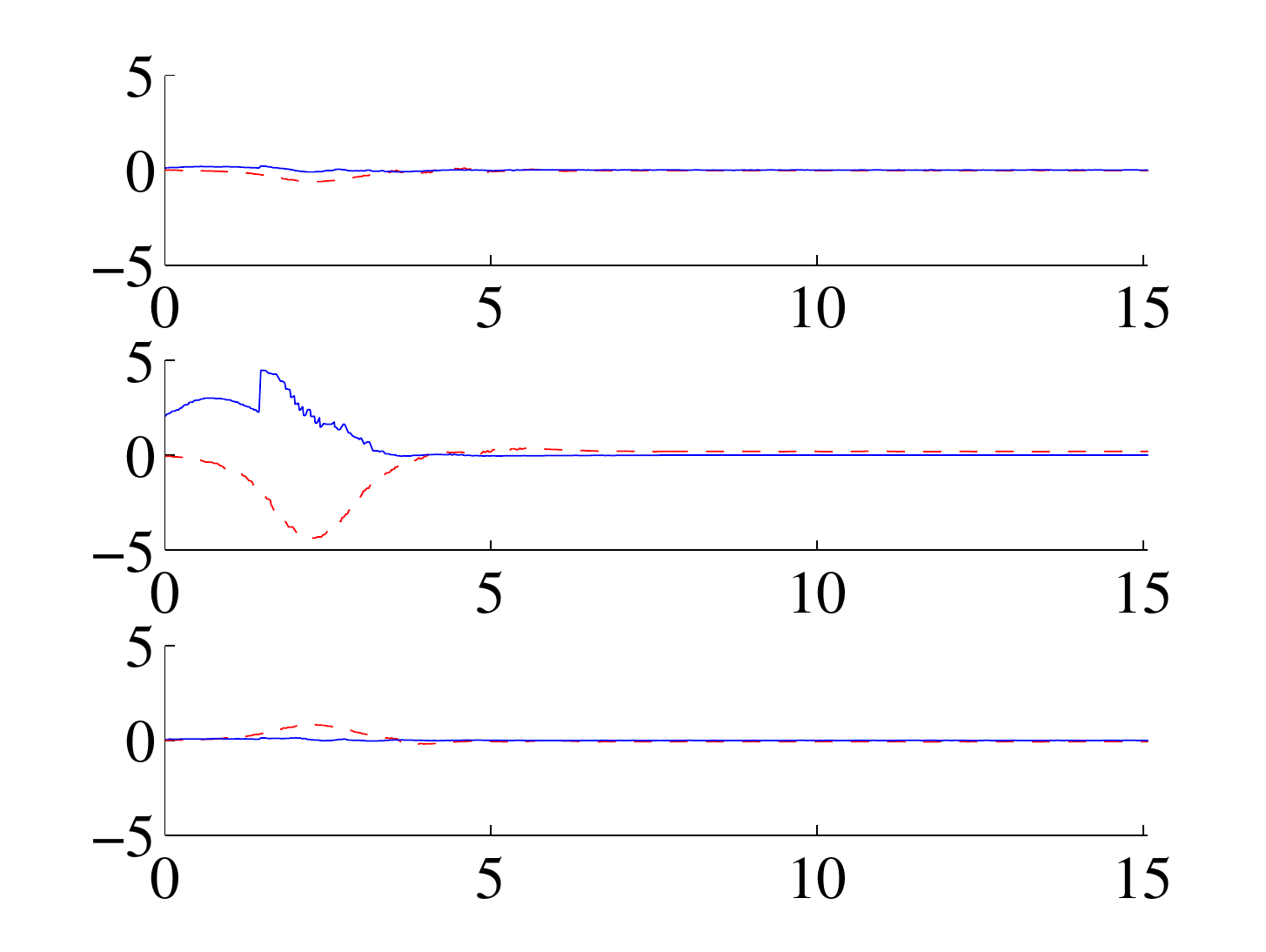}\label{fig:expdataeH}}
	\hspace*{0.015\columnwidth}
	\subfigure[Attitude control error  $\|R-R_d\|$]{
		\includegraphics[width=0.53\columnwidth]{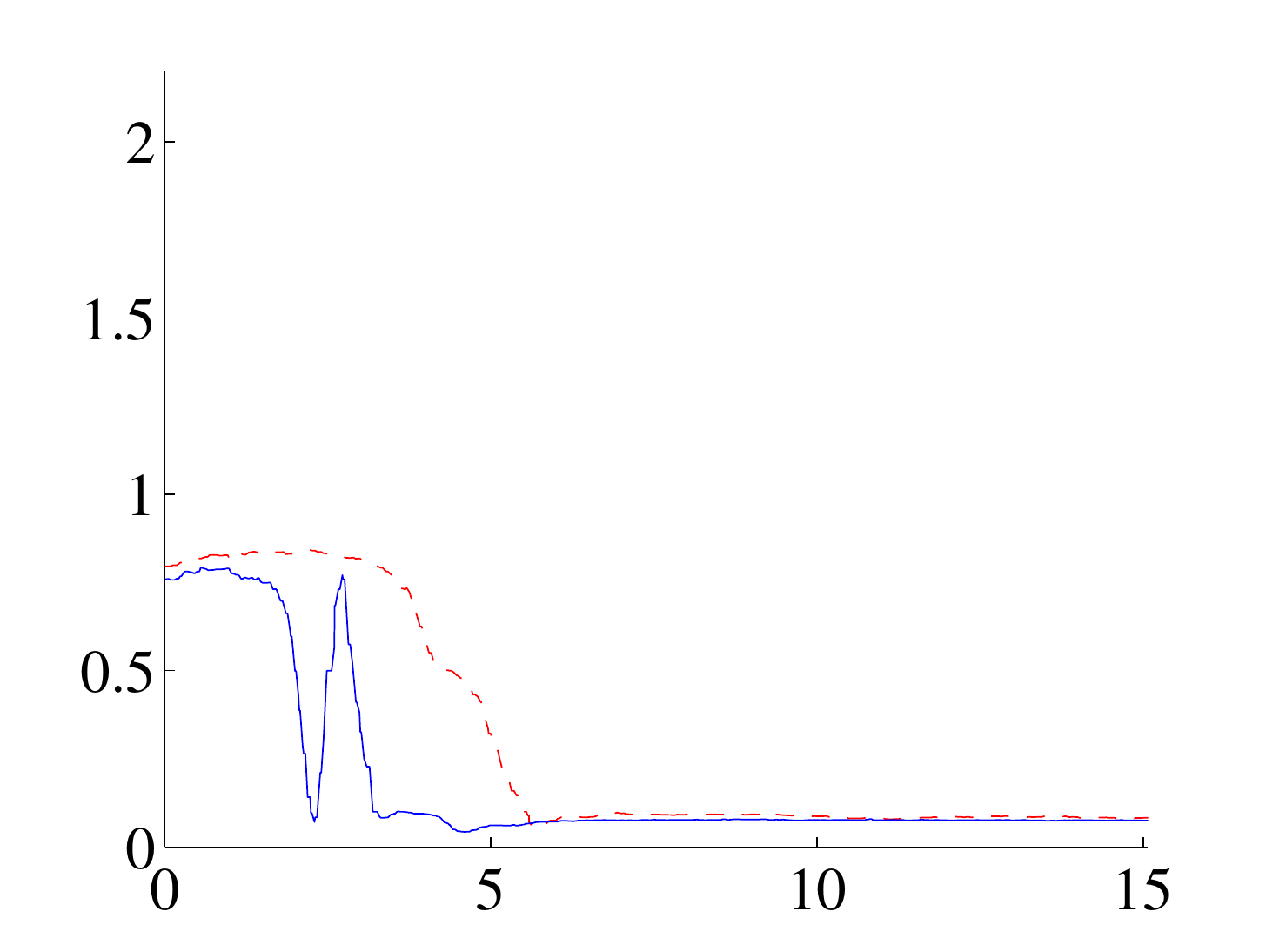}\label{fig:expdataCE}}
}
\caption{Experimental results (blue: hybrid observer, red: smooth complementary observer) show that the hybrid observer converges faster than the complementary observer.}\label{fig:expdata}
\end{figure}

The experimental results are illustrated at Figure~\ref{fig:expdata}, which show several important aspects of applying the observers to realistic scenarios. 
It is illustrated that the proposed hybrid attitude observer exhibits considerably faster initial convergence rate for the attitude estimation error. Initially, the hybrid observer is at the mode $\Rc$, causing a steeper decrease in attitude error than the complementary observer, which is confined to the mode $\Ra$ at Figure~\ref{fig:expdataAttErr}.

The experimental results also demonstrate how the performance of the control system depends heavily on the estimate provided by the observer at Figure \ref{fig:expdataCE}. Indeed, when the UAV depends on the smooth complementary observer, the controlled system requires roughly $50\%$ more time to stabilize than when the hybrid observer is used because the smooth complementary observer provides a more inaccurate attitude estimate for a longer time period.

\section{Conclusions}
It is well known that attitude estimators suffer from the topological restriction inherent to the special orthogonal group that prohibits any smooth estimator to achieve global attractivity. We demonstrate that this may cause significant performance degradation at certain cases, and motivated by this, we propose a hybrid attitude observer that guarantees global asymptotic stability. This exhibits desirable properties that the convergence rate is substantially faster, and the estimated attitudes evolve on the special orthogonal group. These are illustrated by numerical examples and an experiment.


\appendix[]	
\subsection{Proof for Proposition 2}

\paragraph{Equilibrium Configuration}

For each configuration error function $\Psi_\m$, there are four equilibrium configurations that corresponds to the critical points of $\Psi_\m$. Hence, there are twelve critical points, including the single desired equilibrium and eleven undesired critical points. The values of $(\bar b_1,\bar b_2,\bar b_3)$ for each critical point are summarized at Table I. 

We first show that those undesired critical points cannot become an equilibrium of the proposed hybrid attitude observer, as they belong to the jump set. For example, at the third undesired critical point of the nominal mode $\m=\Ra$, when $(\bar{b}_1,\bar{b}_2,\bar{b}_3)=(-b_1,-b_2,b_3)$, we have
	\begin{align*} 
	\Psi_\Ra=2(\lam_1+\lam_2),~ \Psi_\Rb=2\lam_1+\lam_2\al,~\Psi_\Rc=\lam_1\al+2\lam_2,
	\end{align*}
where $\Psi_\Ra>\Psi_\Rb>\Psi_\Rc$ as $\al<2$ and $\lam_1>\lam_2>\lam_3$. Therefore, $\rho=\Psi_\Rc$ and $\Psi_\Ra-\rho=\lam_1(2-\al)\geq\de$ from \refeqn{de}. Hence, the corresponding undesired critical point belongs to the  jump set $\D$ defined at \refeqn{DD}, and it cannot become an equilibrium. This can be repeated to show that all of the eleven undesired equilibria belong to the jump set as well, and the only equilibrium of the proposed hybrid attitude observer is the desired equilibrium.

	\begin{table}\label{table:comp1} 
	\caption{Configurations of critical points}
	\begin{center} 
	\begin{tabular} {|c|c|c|c|}
	\hline 
	$\Psi_\m$ & Critical Point & $(\bar{b}_1,\bar{b}_2,\bar{b}_3)$  & $\Psi_\m-\rho$ \\	
		\hline 
	\multirow{4}{*}{$\Psi_\Ra$} & Desired & $(b_1,\,b_2,\,b_3)$ & $0$\\
	 & Undesired 1 & $(-b_1,\,b_2,\,-b_3)$  & $\lam_1(2-\al)$\\	
	 & Undesired 2 &	$(b_1,\,-b_2,\,-b_3)$ & $\lam_2(2-\al)$\\
	 & Undesired 3 &	$(-b_1,\,-b_2,\,b_3)$ & $\lam_1(2-\al)$\\
	\hline 
	\multirow{4}{*}{$\Psi_\Rb$} & Undesired 1& $(b_1,\,-b_3,\,b_2)$ &  $\lam_2\mu$\\ 
	 & Undesired 2 & $(-b_1,\,-b_3,\,-b_2)$ &   $\lam_1(2-\al)+\lam_2\mu$\\	
	 & Undesired 3 & $(b_1,\, b_3,\,-b_2)$	&   $\lam_2\mu$\\
	 & Undesired 4 & $(-b_1,\,b_3,\,b_2)$	&  $\lam_1(2-\al)+\lam_2\nu$\\	
	\hline		
	\multirow{4}{*}{$\Psi_\Rc$} & Undesired 1 & $(-b_3,\,b_2,\,b_1)$ &   $\lam_1(\al-\be-1)$\\ 
	 & Undesired 2 & $(-b_3,\, -b_2,\,-b_1)$ &   $\lam_1\mu+\lam_2(2-\al)$\\	
	 & Undesired 3 & $(b_3,\, b_2,\,-b_1)$ &  $\lam_1\mu$\\
	 & Undesired 4 & $(b_3,\, -b_2,\,b_1)$	& $\lam_1\nu+k_2(2-\al)$\\	
	\hline		
	\end{tabular}
	\end{center} 
	\end{table} 


\paragraph{Time-derivative of $\Psi_{\m}$}

Next, we derive the time-derivative of $\Psi_{\m}$ for each mode in the flow set. From \refeqn{sRb},
 	\begin{align*} 
 	\dot{b}_i &=\dot{R}^\T s_i =-\hat\W R^\T s_i =-\hat{\W}b_i, \\
 	\dot{\bar b}_i &=\dot{\bar R}^\T s_i =-[(\W_y-\bar{\ga})+k_Re_{H_i}]^\wedge\bar{b}_i,
 	\end{align*}
From \refeqn{Ni}-\refeqn{E2}, the time-derivative of $\Psi_{N_i}$ is 
	\begin{align*} 
	\dot{\Psi}_{N_i}
	&= -(\dot{\bar b}_i)^\T b_i -\bar{b}_i^\T\dot{b}_i \\
	&= -{\bar b}_i^\T(\W+\td{\ga}+k_Re_{H_i})^\wedge b_i +{\bar b}_i^\T\hat\W b_i\\
	&= -{\bar b}_i^\T\hat{\W}b_i -{\bar b}_i^\T\hat{\td{\ga}}b_i -k_R\bar{b}_i^\T(\hat{e}_{H_i}b_i) +{\bar b}_i^\T\hat\W b_i\\
	&= \td{\ga}^\T(\bar{b}_i\t b_i)+k_Re_{H_i}^\T(\bar{b}_i\t b_i),
	\end{align*}
and the time-derivatives of $\Psi_{E_1}, \Psi_{E_2}$ are given by 
	\begin{align*} 
	\dot{\Psi}_{E_i}
	&= \be[(\dot{\bar b}_i)^\T b_3 +\bar{b}_i^\T\dot{b}_3] \\
	&= \be[{\bar b}_i^\T(\W+\td{\ga}+k_Re_{H_i})^\wedge b_3 -{\bar b}_i^\T\hat\W b_3]\\
	&= \be[{\bar b}_i^\T\hat{\W}b_3 +{\bar b}_i^\T\hat{\td{\ga}}b_3 +k_R\bar{b}_i^\T(\hat{e}_{H_i}b_3) -{\bar b}_i^\T\hat\W b_3]\\
	&= \td{\ga}^\T \be(b_3\t\bar{b}_i)+k_Re_{H_i}^\T\be(b_3\t \bar{b}_i),
	\end{align*}	
for $i=1,2$. Therefore, combining these, 
	\begin{align} 
	\dot\Psi_\Ra
	&=\sum_{i=1}^3 \lam_i\dot\Psi_{N_i} =(\td{\ga}+k_Re_{H})^\T\left(\sum_{i=1}^3\lam_i\bar{b}_i\t b_i\right)\no\\ &=-(\td{\ga}+k_Re_{H})^\T e_{H} =-\td{\ga}^\T e_{H}-k_R\|e_{H}\|^2,\label{eq:dpsi}
	\end{align}
for $\m=\Ra$. Also, for $\m=\Rb$, we can show that
	\begin{align*} 
	\dot{\Psi}_\Rb 
	&= \lam_1\dot{\Psi}_{N_1} +\lam_2\dot{\Psi}_{E_2} +\lam_3\dot{\Psi}_{N_3} \\
	&= (\td{\ga}+k_Re_H)^\T[\bar{b}_1\t b_1 +\be(b_3\t\bar{b}_2) +\bar{b}_3\t b_3]\\
	&=-\td{\ga}^\T e_{H}-k_R\|e_{H}\|^2,
	\end{align*}
which can be repeated for $\dot{\Psi}_\Rc$ to conclude that 
	\begin{align} 
	\dot{\Psi}_\m=-\td{\ga}^\T e_{H}-k_R\|e_{H}\|^2,\label{eq:Pmdot}
	\end{align}
for any $\m=\{\Ra,\Rb,\Rc\}$.

\paragraph{Stability Proof}

Define a Lyapunov function as
	\begin{align*} 
	\V_\m=\Psi_\m +\frac{1}{2k_I}\|\td{\ga}\|^2,
	\end{align*}
which is positive-definite about the desired equilibrium $\bar R =R$ with $\tilde\gamma=0$ at the mode $\m=\Ra$.

We first analyze the change of the Lyapunov function when restricted to the flow set as follows. From \refeqn{gob2} and \refeqn{Pmdot}, the time-derivative of $\V$ is given by
	\begin{align} 
	\dot{\V}
	&= \dot{\Psi}+\frac{1}{k_I}\td\ga^\T\dot{\td \ga} =-\td{\ga}^\T e_H-k_R\|e_H\|^2 +\td\ga^\T e_R \no\\
	&=-k_R\|e_H\|^2, \label{eq:dV}
	\end{align}
which implies that the Lyapunov function is non-increasing, $\lim_{t\rightarrow\infty}\|e_H\|=0$, and $\|\tilde\gamma\|$ is uniformly bounded. Further, with the assumption that $\|\W\|$ and $\|\dot\W\|$ are bounded, one can show that $\|\ddot{e}_H\|$ is uniformly bounded as well, which follows $\lim_{t\rightarrow\infty} \dot e_H =0$, from Barbalat's Lemma~\cite[Lemma 8.2]{Kha96}.

For $\mb=\Ra$, these can be used to shown the convergence of the gyro bias error as follows. From \refeqn{widehat} and \refeqn{ob3}, the innovation term $e_H$ can be written as 
	\begin{align*} 
	\hat{e}_H
	&=\sum_{i=1}^nk_i(v_i^B\t v_i^E)^\wedge =\sum_{i=1}^nk_iv_i^E{v_i^B}^\T-v_i^B{v_i^E}^\T \\
	&= \bar{R}^\T\left(\sum_{i=1}^nk_i v_i^I{v_i^I}^\T \right)R -{R}^\T\left(\sum_{i=1}^nk_i v_i^I{v_i^I}^\T \right)\bar{R} \\
	&=\bar{R}^\T KR-R^\T K\bar{R}.
	\end{align*}
From \refeqn{eom2}, \refeqn{gob1}, the time-derivative of $\hat{e}_H$ is 
	\begin{align*} 
	\dot{\hat{e}}_H
	&= -(\W+\td{\ga}+k_Re_H)^\wedge\bar{R}^\T KR +\bar{R}^\T KR\hat{\W} +\hat{\W}R^\T K\bar{R} \\
	&\quad -R^\T K\bar{R}(\W+\td{\ga}+k_Re_H)^\wedge.
	\end{align*} 
Using \refeqn{hat3}, it is rewritten as
	\begin{align} 
	\dot{e}_H
	&= -(\tr[\bar{R}^\T KR]\I-\bar{R}^\T KR)(\td{\ga}+k_Re_H) +\hat{e}_H\W. \no 
	\end{align}
Since $\lim_{t\rightarrow\infty}\|\dot{e}_H\|=\lim_{t\rightarrow\infty}\|e_H\|=0$, 
	\begin{align} 
	\lim_{t\rightarrow\infty}\|(\tr[\bar{R}^\T KR]\I-\bar{R}^\T KR)\td\ga\|=0. \label{eq:lim}
	\end{align}
From \refeqn{K}, the matrix in the above equation is written as
	\begin{align*} 
	&\tr[\bar{R}^\T KR]\I-\bar{R}^\T KR \\
	&\quad=\bar{R}^\T U(\tr[G\td{U}]\I-G\td{U})U^\T\bar{R},
	\end{align*}	
where $\td{U}=U^\T{R}\bar{R}^\T U\in\SO$. One can show $\mathrm{rank}(\tr[G\td{U}]\I-G\td{U})=3$ when $\tilde U=I$, such that $\tr[\bar{R}^\T KR]\I-\bar{R}^\T KR$ also has the full rank in the limit as $t\rightarrow \infty$. Then, \refeqn{lim} implies  $\lim_{t\rightarrow\infty}\|\td\ga\|=0$. In short, the Lyapunov function asymptotically converges to zero for $\mb=\Ra$. Together with \refeqn{dV}, these can be repeated on other two modes, to show that the Lyapunov function asymptotically converges at any mode of the flow set. 

Next, the change of the Lyapunov function over the jump from a mode $\m$ to a new mode $\mathcal{G}(\m)$ given at \refeqn{GG} is
	\begin{align*} 
	\V_\G-\V_\m =\rho-\Psi_\m \leq -\de,
	\end{align*}	
where the last inequality is from the definition of the jump set \refeqn{DD}. In other words, the Lyapunov function strictly decreases over any jump.

From these, it follows that the desired equilibrium $(\bar R, \tilde\gamma)=(R,0)$ is globally asymptotically stable, and the number of jumps is finite.

\bibliography{CDC15}
\bibliographystyle{IEEEtran}
\end{document}